\newcommand{\Q}{\mathbb{Q}} 
\newcommand{\Z}{\mathbb{Z}} 
\newcommand{\qhat}{\widehat{q}}
\newcommand{\qinv}{q^{-1}}
\newcommand{\twopartdef}[4]
{
	\left\{
		\begin{array}{ll}
			#1 & \mbox{if } #2 \\
			#3 & \mbox{if } #4 
		\end{array}
	\right.
}
\newcommand{\threepartdef}[6]
{
	\left\{
		\begin{array}{ll}
			#1 & \mbox{if } #2 \\
			#3 & \mbox{if } #4 \\
			#5 & \mbox{if } #6
		\end{array}
	\right.
}
\newcommand{\co}{\mathrm{co\;}}
\newcommand{\im}{\mathrm{im\;}}
\newcommand{\SL}{\mathrm{SL}}
\newcommand{\frsl}{\mathfrak{sl}}
\newcommand{\Sym}{\mathrm{Sym}}
\newtheorem{thm}{Theorem}
\newtheorem{lem}[thm]{Lemma}
\newtheorem{cor} [thm]{Corollary}
\title{Quantized Coordinate Rings of the  Unipotent Radicals of the Standard Borel Subgroups in $\SL_{n+1}$\footnote{The results of this paper will appear in the author's Ph.D. thesis at the University of California, Santa Barbara}}
\author{Andrew Jaramillo}
\date{\today}
\begin{document}
\maketitle
\section{Introduction}
 Since their discovery in the 1980's by the Leningrad school, there has been much work done on ``quantum groups". Still though there is not yet a widely accepted axiomatic definition for them. (\cite{BroGoo2002} Problem II.10.1) Nevertheless, there are many objects which are referred to as quantum groups  or quantized rings of classical objects.  Many of them share the property that they are noncommutative deformations of classical $k$-algebras (in some sense) with a parameter $q$, that when $q$ takes on such special values (e.g. $q=1$) the classical $k$-algebra structure is recovered.   Of course this is not a rigorous definition, but still is indicative of the what we wish to capture when quantizing  some object.

\subsection{The Quantized Coordinate Ring for $\SL_{n+1}$}
With this description in mind we begin by giving a well-known presentation for the quantized coordinate ring for $\SL_{n+1}$.

We adopt the following conventions: $k$ is a field  and $q \in k^\times$ with $q$ not  a root of unity unless  noted otherwise. Define $\widehat{q}=q-q^{-1}$.
Following   I.2.2 in \cite{BroGoo2002} we define $O_q(\SL_{n+1})$ as the $k$-algebra  generated by  $\{X_{ij}\;|\; 1\leq i,j\leq n+1\}$ presented with the following relations:

\begin{align}
X_{ij}X_{im}&=qX_{im}X_{ij} \;\mathrm{ for }\; j<m\label{OqM1}\\
X_{ij}X_{lj}&=qX_{lj}X_{ij}\;\mathrm{ for }\; i<l \label{OqM2}\\
X_{ij}X_{lm}&=X_{lm}X_{ij}\;\mathrm{ for}\; i<l \;\mathrm{and}\; j>m\label{OqM3}\\
X_{ij}X_{lm}-X_{lm}X_{ij}&=\qhat X_{im}X_{lj}\; \mathrm{ for}\; i<l\;\mathrm{ and}\; j<m\label{OqM4}\\
\sum_{\sigma \in \Sym_{n+1}}(-q)^{\ell(\sigma)}&X_{1\sigma(1)}\cdots X_{n+1\sigma(n+1)}=1\label{OqM5}
\end{align}
We note that relations \eqref{OqM1}--\eqref{OqM4} are the defining relations for $O_q(M_{n+1})$ (the presentation is from \cite{Par1991} section 3.5 but replacing $q^{-1}$ there with $q$ in our relations here).
The left hand side of the fifth relation is often referred to as the {\em quantum determinant} for $O_q(\mathrm{M}_{n+1})$. Thus  we say we are ``setting the quantum determinant equal to 1" in $O_q(\SL_{n+1})$.  Note that when $q=1$ we recover exactly the usual presentation for $O(\SL_{n+1}).$ 

\subsection{Quantized Unipotent Radicals of Standard Borel Subgroups in  $\SL_{n+1}$ } 
When studying subgroups of $\SL_{n+1}$ one often begins with the  Borel subgroups as well as the unipotent radicals of such subgroups.  For instance in $\SL_{n+1}$ the {\em positive (resp. negative) standard Borel subgroup} $B^+$ (resp. $B^-$) is the subgroup consisting of all  upper (resp. lower) triangular matrices in $\SL_{n+1}$.  The  {\em positive (resp. negative) unipotent radical $N^+$}  (resp. $N^-$) of $B^+$ (resp $B^-$) is the subgroup of  upper (resp. lower) triangular unipotent matrices in $B^+$ (resp. $B^-$). 

Both $B^\pm$ are  closed subvarieties of  $\SL_{n+1}$.  It follows then that for the coordinate rings  $O(B^\pm)$ we have
\begin{align*}
O(B^+)\cong O(\SL_{n+1})/\langle X_{ij} \mid i>j\rangle& &
O(B^-)\cong O(\SL_{n+1})/\langle X_{ij}\mid i<j\rangle
\end{align*}
Similarly, since $N^\pm$ are  closed subvarieties of $\SL_{n+1}$, for  the coordinate rings  $O(N^\pm)$ we have
$$O(N^\pm)\cong O(B^\pm)/\langle X_{ii}-1\;|\; i=1,\ldots, n \rangle$$

We now wish to try and ``quantize" these coordinate rings. By our  discussion, the quantized coordinate rings,  $O_q(B^\pm)$, are to be  noncommutative deformations  of the coordinate rings of the $B^\pm$.  
 Since $B^\pm$ are Poisson-algebraic subgroups of $\SL_{n+1}$ we also require the semiclassical limits, as in \cite{Goo2008}, of $O_q(B^\pm)$ to be $O(B^\pm)$ as Poisson-algebras.  Having this property does not leave us much choice in defining $O_q(B^\pm)$. Thus following  \cite{Par1991} section 6.1 we define the {\em quantized coordinate rings of the standard Borel Subgroups} to be
\begin{align*}
O_q(B^+):=O_q(\SL_{n+1})/\langle X_{ij}\mid i>j\rangle& &
O_q(B^-):=O_q(\SL_{n+1})/\langle X_{ij}\mid i<j\rangle
\end{align*}
We will often abuse notation and denote simply by  $X_{ij}$  the coset containing $X_{ij}$.

 Attempting to define the quantized coordinate rings  $O_q(N^\pm)$ as 
 $$O_q(N^\pm)=O_q(B^\pm)/\langle X_{ii}-1\mid 1\leq i\leq n+1 \rangle$$
  would not be helpful since relation (1)   in $O_q(N^\pm)$ would imply that for all $i\neq j$
  $$X_{ij}=X_{ii}X_{ij}=qX_{ij}X_{ii}=qX_{ij}$$
 Since $q$ is not a root of unity this implies $X_{ij}=0$ for all $i\neq j$.  Thus $O_q(N^\pm)\cong k$ \cite{Par1991} remark 6.3.
 Though this may be  a nice algebra to study it is not a particularly useful analogue to the classical setting.  Therefore we must try and define $O_q(N^\pm)$ in another way. 
 
 There are (surprisingly) few definitions found in the literature although some authors (\cite{Gei2011}) have defined $O_q(N^\pm)$ to be $U_q^\pm(\frsl_{n+1})$ (see below) since when $q=1$ we recover $O(N^\pm)$.  
This is definition ``quantizes" $O(N^\pm)$  however,
 we  would like to find an algebra more directly related to $O_q(B^\pm)$. Since $N^\pm$ are {\bf not} Poisson-algebraic subgroups of $B^\pm$ there are fewer requirements when defining quantized coordinate rings for them.  
In contrast to  the classical case however, there are no ``natural" quotient algebras of $O_q(B^\pm)$ that reduce to $O(N^\pm)$ when $q=1$. (\cite{Par1991} remark 6.3.) Nevertheless, there exist candidate  subalgebras of $O_q(B^\pm)$ that do have this property, some of which were defined in \cite{Goo2001}, and thus are good candidates for the definition of $O_q(N^\pm)$.  

Another possible way to define $O_q(N^\pm)$ is to first note  that  $B^\pm$ is the semidirect product $T \ltimes N^\pm$ where $T$ is the diagonal subgroup (standard maximal torus) of $\SL_{n+1}$.  Thus $O(N^\pm)$ is the algebra of coinvariants using the corresponding coaction of $O(T)$ on $O(B^\pm)$.  Therefore another possible definition for $O_q(N^\pm)$ is as the subalgebra of coinvariants for a suitable coaction of $O_q(T)$ on $O_q(B^\pm)$ using the Hopf algebra structure of $O_q(\SL_{n+1})$.

We shall prove that all the the above approaches for defining $O_q(N^\pm)$ are isomorphic.  We begin with those subalgebras defined in  \cite{Goo2001} since these are more easily described and related. Finally, once we have established the isomorphisms   we apply the isomorphisms $U_q^{\geq 0} \cong O_q(B^+)$ and $U_q^{\leq 0} \cong O_q(B^-)$ established by Joseph in \cite{Jos1995} and show the above definitions for $O_q(N^\pm)$ are  isomorphic to $U_q^\pm(\frsl_{n+1})$.

\section{Definition and Structure of $O_q(N^\pm)$ and $O_q(N^\pm)'$}
Note that in $O_q(B^+ )$ (resp. $O_q(B^-)$) that  $X_{ij} =0$ for $i>j$ (resp. $i<j$).  Moreover,   since the quantum determinant is $1$,  relation (5)  in $O_q(B^\pm)$ simplifies to
$$X_{11}\cdots X_{n+1,n+1}=1$$ 
 Moreover, by relation (4) in $O_q(B^\pm)$ we have
$X_{ii}X_{jj}=X_{jj}X_{ii}$.
 Taking these two facts together we can conclude that for all $i=1,2,\ldots, n+1$ the elements $X_{ii}$ are in fact, invertible in $O_q(B^\pm)$. Therefore we may define in $O_q(B^+)$ two subalgebras 
$$O_q(N^+):=k\left\langle X_{ii}^{-1}X_{ij} \mid 1\leq i<j\leq n+1\right\rangle$$ 
 $$O_q(N^+)':=k\left\langle X_{ij}X_{jj}^{-1} \mid 1\leq i<j\leq n+1\right \rangle$$
 These are natural choices because they both become isomorphic to $O_q(N^+)$ when $q=1$.  We may analogously define subalgebras in $O_q(B^-)$ by
 $$O_q(N^-):=k[\left\langle X_{jj}^{-1}X_{ji} \mid 1\leq i<j\leq n+1\right\rangle$$ 
 $$O_q(N^-)':=k\left\langle X_{ji}X_{ii}^{-1} \mid 1\leq i<j\leq n+1\right\rangle$$ 
Having defined the algebras $O_q(N^\pm)$ and $O_q(N^\pm)'$, we now analyze their structure.

 \begin{lem}
 \label{StructureOfOqN}
 Define for $1\leq i<j\leq n+1$ the elements $y_{ij}=X_{ii}^{-1}X_{ij}$ and $z_{ij}=X_{ij}X_{jj}^{-1}$ in $O_q(B^+)$. 
 The following are defining relations for $O_q(N^+)$ and $O_q(N^+)'$ respectively:

\begin{align}
y_{ij}y_{im}&=qy_{im}y_{ij}  & &(j<m) \label{Oqn+1}\\
y_{ij}y_{lj}&=qy_{lj}y_{ij} & & (i<l) \label{Oqn+2}\\
y_{ij}y_{lm}&=y_{lm}y_{ij}& &(i<l,\, j>m) \label{Oqn+3}\\
y_{ij}y_{lm}&=\threepartdef{
y_{lm}y_{ij}}{j<l}
{q^{-1}y_{lm}y_{ij}+\qinv \qhat y_{im}}{j=l}
{y_{lm}y_{ij}+\qhat y_{im}y_{lj}}{j>l}& &(i<l,\, j<m) \label{Oqn+4}
\end{align}

 \begin{align}
z_{ij}z_{im}&=qz_{im}z_{ij}  & &(j<m) \label{Oqn'+1}\\
z_{ij}z_{lj}&=qz_{lj}z_{ij} & & (i<l) \label{Oqn'+2}\\
z_{ij}z_{lm}&=z_{lm}z_{ij}& &(i<l,\, j>m) \label{Oqn'+3}\\
z_{ij}z_{lm}&=\threepartdef{
z_{lm}z_{ij}}{j<l}
{q^{-1}z_{lm}z_{ij}+q^{-1}\qhat z_{im}}{j=l}
{z_{lm}z_{ij}+\qhat z_{im}z_{lj}}{j>l}
& &(i<l,\, j<m)  \label{Oqn'+4}
\end{align}
 \end{lem}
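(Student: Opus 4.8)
The plan is to work entirely inside $O_q(B^+)$, where $X_{ab}=0$ for $a>b$ and each $X_{ii}$ is invertible, and to reduce every computation to the matrix relations \eqref{OqM1}--\eqref{OqM4}. The first and most useful step is to record how a diagonal generator and its inverse commute past an off-diagonal generator. A short case analysis on the position of $l$ relative to $\{i,j\}$ shows that $X_{ll}$ commutes with $X_{ij}$ (for $i<j$) whenever $l\notin\{i,j\}$: when $l<i$ or $l>j$ this is \eqref{OqM4} with the $\qhat$-correction killed by a vanishing lower-triangular factor, and when $i<l<j$ it is the anti-diagonal relation \eqref{OqM3}. When $l=i$ the row relation \eqref{OqM1} gives $X_{ii}^{-1}X_{ij}=\qinv X_{ij}X_{ii}^{-1}$, and when $l=j$ the column relation \eqref{OqM2} gives $X_{jj}^{-1}X_{ij}=qX_{ij}X_{jj}^{-1}$. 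These scalars, together with the fact that diagonals commute, are the source of all the $q$-powers in \eqref{Oqn+1}--\eqref{Oqn+4}.

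With these commutations in hand I would compute each product $y_{ij}y_{lm}=X_{ii}^{-1}X_{ij}X_{ll}^{-1}X_{lm}$ by first sliding $X_{ij}$ past $X_{ll}^{-1}$, then applying the appropriate matrix relation to $X_{ij}X_{lm}$, and finally recollecting the diagonal inverses to recover $y$'s. Relations \eqref{Oqn+1}--\eqref{Oqn+3} are immediate: the diagonal factors contribute only scalars and \eqref{OqM1}--\eqref{OqM3} transfer directly. The real content is \eqref{Oqn+4}, where $i<l$ and $j<m$ force the use of \eqref{OqM4}, whose correction term is $\qhat X_{im}X_{lj}$.

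The three cases of \eqref{Oqn+4} are governed by the fate of the factor $X_{lj}$. When $j<l$ we have $l>j$, so $X_{lj}=0$, the correction vanishes, and (after checking $X_{lm}$ commutes with $X_{ii}^{-1}$) one gets $y_{ij}y_{lm}=y_{lm}y_{ij}$. When $j>l$ the entry $X_{lj}$ is a genuine upper-triangular generator, and after commuting $X_{im}$ past $X_{ll}^{-1}$ and $X_{lm}$ past $X_{ii}^{-1}$ the correction becomes exactly $\qhat\,y_{im}y_{lj}$. The subtle case, which I expect to be the main obstacle, is $j=l$: here $X_{ij}=X_{il}$ shares its column with $X_{ll}$, so sliding it past $X_{ll}^{-1}$ now costs a factor $\qinv$ from \eqref{OqM2} rather than being free, which is precisely why both terms acquire the extra $\qinv$; moreover the correction factor is $X_{lj}=X_{ll}$, a diagonal element, so after cancelling the $X_{ll}$'s it collapses to $\qinv\qhat\,X_{ii}^{-1}X_{im}=\qinv\qhat\,y_{im}$. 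Conceptually the three cases are the single rule ``the correction is $\qhat\,y_{im}y_{lj}$'' read with $y_{lj}=0$ for $l>j$, with $y_{ll}=1$, and with a genuine generator for $l<j$, up to the $\qinv$ bookkeeping from the shared column. The relations for $z_{ij}=X_{ij}X_{jj}^{-1}$ follow from the mirror computation, pulling the diagonal factor out on the right; the shared index in the $j=l$ case again forces a $\qinv$, now from the row relation \eqref{OqM1} rather than the column relation, yielding the $\qinv\qhat$ in \eqref{Oqn'+4}.

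Finally, to justify that these are \emph{defining} relations and not merely relations that hold, I would argue that the algebra abstractly presented by generators and relations \eqref{Oqn+1}--\eqref{Oqn+4} surjects onto $O_q(N^+)$, and that this surjection is injective. For injectivity I would show that \eqref{Oqn+1}--\eqref{Oqn+4} suffice to rewrite any word in the $y_{ij}$ as a $k$-linear combination of monomials ordered by a fixed total order on the index pairs $(i,j)$, so that ordered monomials span the presented algebra, and then check that their images are linearly independent in $O_q(B^+)$. The latter follows from the iterated Ore extension (PBW) structure of $O_q(B^+)$ over $k[X_{11}^{\pm1},\dots,X_{n+1,n+1}^{\pm1}]$: an ordered monomial in the $y_{ij}$ equals, up to a diagonal unit, the corresponding ordered monomial in the $X_{ij}$ plus lower terms, and these lie in a PBW basis, so matching the spanning set against that basis forces the surjection to be an isomorphism. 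The $j=l$ bookkeeping in \eqref{Oqn+4} and the verification that the rewriting terminates are the two places demanding the most care.
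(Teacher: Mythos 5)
Your proposal is correct and follows essentially the same route as the paper: a direct verification of the relations using the commutation of the diagonal generators $X_{ll}^{\pm 1}$ past the $X_{ij}$ (with the $j=l$ case handled exactly as you describe), followed by the argument that the abstract presented algebra surjects onto $O_q(N^+)$ and that ordered monomials span the source while mapping to a linearly independent set in $O_q(B^+)$. The paper cites the linear independence of lexicographically ordered monomials from Goodearl's Lemma 2.8 where you invoke the iterated Ore extension structure, but this is the same underlying PBW fact.
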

 \begin{proof}

 
 First we show that the generators $y_{ij}$ of $O_q(N^+)$ satisfy the relations \eqref{Oqn+1} -- \eqref{Oqn+4} above. Note that from the relations \eqref{OqM3} and \eqref{OqM4} in $O_q(\SL_{n+1})$,  the elements $X_{ii}$ commute with $X_{lm}$  in $O_q(B^+)$ whenever $l,m\neq i$.
 
 For $j<m$ and $i=l$
 \begin{align*}
y_{ij}y_{im}&=X_{ii}^{-1}X_{ij}X_{ii}^{-1}X_{im}=\qinv X_{ii}^{-1}X_{ij}X_{im}X_{ii}^{-1} \\
 &=X_{ii}^{-1}X_{im}X_{ij}X_{ii}^{-1}=qX_{ii}^{-1}X_{im}X_{ii}^{-1}X_{ij}
 =qy_{im}y_{ij}
\end{align*}

For $i<l$ and $j=m$
\begin{align*}
y_{ij}y_{lj}&=X_{ii}^{-1}X_{ij}X_{ll}^{-1}X_{lj}=X_{ii}^{-1}X_{ll}^{-1}X_{ij}X_{lj}=qX_{ii}^{-1}X_{ll}^{-1}X_{lj}X_{ij}\\
&=qX_{ll}^{-1}X_{ii}^{-1}X_{lj}X_{ij}=qX_{ll}^{-1}X_{lj}X_{ii}^{-1}X_{ij}=qy_{lj}y_{ij}
\end{align*}

For $i<l$ and $j>m$
\begin{align*}
y_{ij}y_{lm}&=X_{ii}^{-1}X_{ij}X_{ll}^{-1}X_{lm}=X_{ii}^{-1}X_{ll}^{-1}X_{ij}X_{lm}=X_{ii}^{-1}X_{ll}^{-1}X_{lm}X_{ij}\\
&=X_{ll}^{-1}X_{ii}^{-1}X_{lm}X_{ij}=X_{ll}^{-1}X_{lm}X_{ii}^{-1}X_{ij}=y_{lm}y_{ij}
\end{align*}

For $i<l,\, j<m,$ and $j<l$
\begin{align*}
y_{ij}y_{lm}&=X_{ii}^{-1}X_{ij}X_{ll}^{-1}X_{lm}=X_{ii}^{-1}X_{ll}^{-1}X_{ij}X_{lm}=X_{ii}^{-1}X_{ll}^{-1}X_{lm}X_{ij}\\
&=X_{ll}^{-1}X_{ii}^{-1}X_{lm}X_{ij}=X_{ll}^{-1}X_{lm}X_{ii}^{-1}X_{ij}= y_{lm}y_{ij}
\end{align*}

For $i<l,\, j<m,$ and $j=l$
\begin{align*}
y_{ij}y_{lm}&=X_{ii}^{-1}X_{ij}X_{ll}^{-1}X_{lm}\\
&=\qinv X_{ii}^{-1}X_{ll}^{-1}X_{ij}X_{lm}=\qinv X_{ii}^{-1}X_{ll}^{-1}(X_{lm}X_{ij}+\qhat X_{im}X_{lj})\\
&=\qinv X_{ii}^{-1}X_{ll}^{-1}X_{lm}X_{ij}+\qinv \qhat X_{ii}^{-1}X_{ll}^{-1}X_{im}X_{lj}\\
&=\qinv X_{ll}^{-1}X_{ii}^{-1}X_{lm}X_{ij}+\qinv \qhat X_{ii}^{-1}X_{im}X_{ll}^{-1}X_{lj}\\
&=\qinv X_{ll}^{-1}X_{lm}X_{ii}^{-1}X_{ij}+\qinv \qhat X_{ii}^{-1}X_{im}=q^{-1}y_{lm}y_{ij}+\qinv \qhat y_{im}
\end{align*}

For $i<l,\, j<m,$ and $j>l$
\begin{align*}
y_{ij}y_{lm}&=X_{ii}^{-1}X_{ij}X_{ll}^{-1}X_{lm}= X_{ii}^{-1}X_{ll}^{-1}X_{ij}X_{lm}= X_{ll}^{-1}X_{ii}^{-1}X_{ij}X_{lm}\\
&= X_{ll}^{-1}X_{ii}^{-1}(X_{lm}X_{ij}+\qhat X_{im}X_{lj})= X_{ll}^{-1}X_{ii}^{-1}X_{lm}X_{ij}+\qhat X_{ll}^{-1}X_{ii}^{-1}X_{im}X_{lj}\\
&= X_{ll}^{-1}X_{lm}X_{ii}^{-1}X_{ij}+\qhat X_{ii}^{-1}X_{im}X_{ll}^{-1}X_{lj}=y_{lm}y_{ij}+\qhat y_{im}y_{lj}
\end{align*}

We now show that  the above relations are  a defining set of relations for $O_q(N^+)$.  
Let $B$ be the algebra generated by $\{b_{ij}\mid 1\leq i<j\leq n+1\}$  presented with relations analogous to those in  \eqref{Oqn+1}-\eqref{Oqn+4} above but replacing $y_{ij}$ with $b_{ij}$. Let $\psi$ be the $k$-algebra homomorphism $\psi: B \to O_q(N^+)$ defined by $\psi(b_{ij})=y_{ij}$.

Order the $X_{ij}$ lexicographically in $O_q(B^+)$ omitting $X_{n+1,n+1}$.  As asserted in  the proof of \cite{Goo2001} Lemma 2.8,   the set of monomials ordered in this way is linearly independent.  

This is still true if we allow ordered monomials with negative exponents on $X_{ii}$.  Moreover since the $X_{ii}$ commute up to scalars with every $X_{lm}$ in $O_q(B^+)$ then the set of ordered monomials in the $y_{ij}$ is linearly independent, hence forms a  basis for $O_q(N^+)$.

Now the monomials in the $b_{ij}$ form a spanning set for $B$. 
Hence $\psi$ maps a spanning set of $B$ to a basis of $O_q(N^+)$.  Therefore $\psi$ is an isomorphism.

 It can be similarly verified that the relations \eqref{Oqn'+1}-\eqref{Oqn'+4} give a presentation of  $O_q(N^+)'$.  
 \end{proof}
 
 \begin{thm}
 \label{OqIsomToOq'}
 The algebras $O_q(N^\pm)$ and $O_q(N^\pm)'$ are all isomorphic.
 \end{thm}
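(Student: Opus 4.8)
The strategy is to combine the explicit presentations of Lemma~\ref{StructureOfOqN} with a single transpose automorphism of $O_q(\SL_{n+1})$ that interchanges the positive and negative settings. The first point is immediate: comparing \eqref{Oqn+1}--\eqref{Oqn+4} with \eqref{Oqn'+1}--\eqref{Oqn'+4} and recalling that $\qinv=q^{-1}$, the two lists of defining relations coincide term by term. Hence the two presentations are identical, and the assignment $y_{ij}\mapsto z_{ij}$ is an isomorphism $O_q(N^+)\cong O_q(N^+)'$. It therefore suffices to connect each of $O_q(N^+)$ and $O_q(N^+)'$ with one of the negative algebras, after which all four algebras are linked in a single chain.

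To bridge the $+$ and $-$ sides I would introduce the transpose map $\sigma\colon O_q(\SL_{n+1})\to O_q(\SL_{n+1})$ determined by $\sigma(X_{ij})=X_{ji}$ and taken as a $k$-algebra homomorphism preserving the order of products. The key technical step is to verify that $\sigma$ respects the defining relations \eqref{OqM1}--\eqref{OqM5}. Here \eqref{OqM1} and \eqref{OqM2} are carried onto one another, \eqref{OqM3} is preserved, and the only delicate case is \eqref{OqM4}: applying $\sigma$ to it yields the correct commutator precisely because the two off-diagonal factors appearing on the right-hand side commute, which is itself an instance of \eqref{OqM3}. Finally $\sigma$ fixes the quantum determinant in \eqref{OqM5}, since the quantum determinant is invariant under transposition; thus $\sigma$ is a well-defined automorphism. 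Because $\sigma$ carries the ideal $\langle X_{ij}\mid i>j\rangle$ onto $\langle X_{ij}\mid i<j\rangle$ and fixes each diagonal generator $X_{ii}$, it descends to an isomorphism $O_q(B^+)\cong O_q(B^-)$ under which the diagonal units are preserved.

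I would then trace the generators through $\sigma$. Using that each $X_{ii}$ commutes with the relevant off-diagonal generator up to a power of $q$ (an instance of \eqref{OqM1} or \eqref{OqM2}), one computes
\[
\sigma(y_{ij})=X_{ii}^{-1}X_{ji}=q^{-1}X_{ji}X_{ii}^{-1},\qquad
\sigma(z_{ij})=X_{ji}X_{jj}^{-1}=q^{-1}X_{jj}^{-1}X_{ji}.
\]
Thus $\sigma$ sends each generator $y_{ij}$ of $O_q(N^+)$ to a scalar multiple of the generator $X_{ji}X_{ii}^{-1}$ of $O_q(N^-)'$, and each generator $z_{ij}$ of $O_q(N^+)'$ to a scalar multiple of the generator $X_{jj}^{-1}X_{ji}$ of $O_q(N^-)$. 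Since $q\in k^\times$, these scalars are units, so $\sigma$ restricts to isomorphisms $O_q(N^+)\cong O_q(N^-)'$ and $O_q(N^+)'\cong O_q(N^-)$.

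Chaining these identifications gives $O_q(N^-)\cong O_q(N^+)'\cong O_q(N^+)\cong O_q(N^-)'$, so all four algebras are mutually isomorphic. I expect the main obstacle to be the verification that $\sigma$ is well defined --- in particular confirming that \eqref{OqM4} is preserved (where one must invoke \eqref{OqM3} to commute the remaining pair of factors) and that the quantum determinant \eqref{OqM5} is transpose-invariant --- rather than the ensuing scalar bookkeeping, which is routine once the commutation of the $X_{ii}$ with the off-diagonal entries has been recorded.
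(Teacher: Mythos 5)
Your proposal is correct and follows essentially the same route as the paper: the coincidence of the presentations from Lemma~\ref{StructureOfOqN} gives $O_q(N^+)\cong O_q(N^+)'$, and the transpose automorphism (which the paper imports from Parshall--Wang rather than verifying by hand) descends to $O_q(B^+)\cong O_q(B^-)$ and carries $O_q(N^+)$ onto $O_q(N^-)'$ and $O_q(N^+)'$ onto $O_q(N^-)$ via exactly the computation $\overline{\tau}(X_{ii}^{-1}X_{ij})=q^{-1}X_{ji}X_{ii}^{-1}$ and $\overline{\tau}(X_{ij}X_{jj}^{-1})=q^{-1}X_{jj}^{-1}X_{ji}$ that you give. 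The only difference is that you sketch the verification that the transpose respects relations \eqref{OqM1}--\eqref{OqM5} directly, where the paper cites the literature; your identification of \eqref{OqM4} and the determinant as the delicate cases is accurate.
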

 
 \begin{proof}
From Lemma~\ref{StructureOfOqN} it is immediate  that $O_q(N^+)\cong O_q(N^+)'$  since the algebras have the same presentation. 

From \cite{Par1991} Proposition 3.7.1 there exists a transpose homomorphism $\tau:O_q(\SL_{n+1})\to O_q(\SL_{n+1})$ so that $\tau(X_{ij})=X_{ji}$ for all $i,j \in \{1,2,\ldots, n+1\}$ . This is an automorphism of $O_q(\SL_{n+1})$ that maps $\left \langle X_{ij}\mid i>j\right\rangle$ onto $\left\langle X_{ij}\mid i<j\right\rangle$.  Therefore there is an  induced  isomorphism $\overline{\tau}: O_q(B^+)\to O_q(B^-)$. 

Observe that 
\begin{align*}
\overline{\tau}(X_{ii}^{-1}X_{ij})&=X_{ii}^{-1}X_{ji}=q^{-1}X_{ji}X_{ii}^{-1}\\
\overline{\tau}(X_{ij}X_{jj}^{-1})&=X_{ji}X_{jj}^{-1}=q^{-1}X_{jj}^{-1}X_{ji}
\end{align*}
for all $i<j$. 
Hence, $\overline{\tau}$ maps $O_q(N^+)$ onto $O_q(N^-)'$ and so  $O_q(N^+)\cong O_q(N^-)'$.  Similarly $\overline{\tau}$ maps $O_q(N^+)'$ onto $O_q(N^-)$.  Therefore $O_q(N^+)'\cong O_q(N^-)$
 \end{proof}
 
Using this theorem we will refer to $O_q(N^+)$ as the {\em positive quantized unipotent subgroup of $O_q(\SL_{n+1})$}. Similarly, $O_q(N^-)$ is the {\em negative quantized unipotent subgroup.} Notice that if $q=1$ we have a commutative $k$-algebra which matches the usual presentation for $O(N^\pm)$.

\section{Coinvariants of $O_q(B^\pm)$}
A {\em left (resp. right) coinvariant} for a left (resp. right)  comodule algebra $A$ over a Hopf algebra $H$ is any element $x\in A$ so that $\eta(x)=1\otimes x$ (resp. $x\otimes 1$) where $\eta$ is the comodule structure map.  We denote the subalgebra of left (or right) coinvariants in $A$  by $A^{\co \eta}$.

 In \cite{Kli1997} 9.2.3 Proposition 10 it is shown that  $O_q(\SL_{n+1})$ is a Hopf algebra.  Specifically, the comultiplication and the counit are defined by
 \begin{align*}
\Delta(X_{ij}):=\sum_{k=1}^{n+1} X_{ik}\otimes X_{kj}& & \epsilon(X_{ij}):=\delta_{ij}
\end{align*}
 To define the antipode we first recall the notation for a $k \times k$ quantum minor  in $O_q(M_{n+1})$.  That is, for $I \subset \{1,\ldots,n+1\}$ and $J\subset \{1,\ldots,n+1\}$  where  $|I|=|J|=k$  and $I=\{i_1,\ldots,i_k\}$ with $i_1<i_2<\cdots<i_k$ and $J=\{j_1,\ldots,j_k\}$ with $j_1<j_2<\cdots<j_k$ then 
$$[I\mid J]:=\sum_{\sigma \in \Sym_k}(-q)^{\ell(\sigma)}X_{i_1,j_{\sigma(1)}}\cdots X_{i_k,j_{\sigma(k)}}
$$
Denote by $M_{ij}$ the $n \times n$ quantum minor $[1,2,\cdots,\widehat{i},\cdots, n+1\mid 1,2,\cdots, \widehat{j},\cdots, n+1]$ in $O_q(\SL_{n+1})$.   The antipode, $S$,  for $O_q(\SL_{n+1})$ is defined by
\[
S(X_{ij})=q^{i-j}M_{ji}
\]
\subsection{$O_q(B^\pm)$ as  Hopf Algebras}
A {\em Hopf ideal} $I$ in a Hopf algebra $A$ is  an ideal $I\subset \ker \epsilon$ so that   $\Delta (I)\subset A \otimes I +I\otimes A$ and $S(I)\subset I$. If $I$ is a Hopf ideal then $A/I$ is also  a Hopf  algebra induced from the Hopf algebra structure of $A$ using the natural induced comultiplication, counit, and antipode.

Define $I^-:=\langle X_{ij}\mid  i>j\rangle$ and $I^+:=\langle X_{ij}\mid i<j\rangle$ in $O_q(\SL_{n+1})$.   
It follows from Lemma 6.1.1 \cite{Par1991}  that $I^\pm$ are Hopf Ideals
%
%
%
Since $O_q(B^\pm)=O_q(\SL_{n+1})/I^\mp$  then $O_q(B^\pm)$ are Hopf algebras induced from the Hopf algebra structure of $O_q(\SL_{n+1})$.  
We denote the comultiplication, counit, and antipode of the Hopf algebra of $O_q(B^\pm)$ by $\Delta_{B^\pm}$, $\epsilon_{B^\pm}$, and $S_{B^\pm}$ respectively, when emphasis is needed, otherwise we will retain the standard notation $\Delta, \epsilon$, and $S$.
Specifically we note that for $X_{ij} \in O_q(B^+)$ and $X_{rs} \in O_q(B^-)$ we have  
\begin{align*}
\Delta_{B^+}(X_{ij})=\sum_{i\leq k\leq j}X_{ik}\otimes X_{kj}& &\Delta_{B^-}(X_{rs})=\sum_{r\geq k \geq s}X_{rk}\otimes X_{ks}
\end{align*}

\subsection{$O_q(T)$}
In the classical setting the coordinate ring of the standard maximal torus of $O(\SL_{n+1})$ is 
\[
O(T)\cong O(\SL_{n+1})/\langle X_{ij}\;|\; i\neq j\rangle
\]
We may therefore define the {\em quantized coordinate ring of the standard maximal torus for $SL_{n+1}$} by
\[
O_q(T):=O_q(\SL_{n+1})/\langle X_{ij}\:|\; i\neq j\rangle
\]
Denote by $Y_{ii}$ the coset containing $X_{ii}$ in $O_q(T)$.  We first note that $O_q(T)$ is generated by the $Y_{ii}$ where $i=1,2,\ldots,n+1$.
It is straightforward to check that  $Y_{ii}Y_{jj}=Y_{jj}Y_{ii}$ for all $i,j \in \{1,2,\cdots,n+1\}$. Thus $O_q(T)$ is actually a commutative algebra. 
Moreover since the quantum determinant is 1 in $O_q(\SL_{n+1})$ this implies that  
$$Y_{1,1}\cdots Y_{n+1,n+1}=1$$
That is, each of the $Y_{ii}$ are invertible. Therefore we have, in fact, $O_q(T)\cong O(T)$. Moreover, since $O_q(T)=O_q(\SL_{n+1})/(I^++I^-)$ this implies that $O_q(T)$ is also a Hopf algebra induced from $O_q(\SL_{n+1})$. We denote the comultiplication, counit, and antipode by $\Delta_T$, $\epsilon_T$, and $S_T$ respectively, when emphasis is needed. Specifically, we note that for $Y_{ii} \in O_q(T)$ we have
\[
\Delta_T(Y_{ii})=Y_{ii}\otimes Y_{ii}
\]

\subsection{$O_q(T)$-coactions on $O_q(B^\pm)$}
There are  natural projection homomorphisms $p^\pm:O_q(B^\pm) \to O_q(T)$  defined by $p^\pm(X_{ii})=Y_{ii}$ and $p^\pm(X_{ij})=0$ for $i\neq j$.  Therefore using these maps as well as  the comultiplication maps $\Delta$ on $O_q(B^\pm)$ we define the maps $\eta^\pm:O_q(B^\pm)\to O_q(B^\pm)\otimes O_q(T)$ by 
\[
\eta^\pm:=(id \otimes p^\pm) \Delta 
\]
Since $p^\pm$ and $\Delta$ are  $k$-algebra homomorphisms, so are  $\eta^\pm$. Similarly the  maps $\theta^\pm: O_q(B^\pm) \to O_q(T)\otimes O_q(B^\pm)$ defined by   $\theta^\pm:=(p^\pm\otimes id) \Delta$ are $k$-algebra homomorphisms. 
Specifically, since  $p^\pm(X_{ij})=0$ for $i\neq j$ and $p^\pm(X_{ii})=Y_{ii}$ 
 this implies  that for $X_{ij} \in O_q(B^\pm)$ we have
\begin{align*}
 \eta^\pm(X_{ij})=X_{ij}\otimes Y_{jj}& &\theta^\pm(X_{ij})=Y_{ii}\otimes X_{ij}
\end{align*}  
It is easily checked that $\eta^\pm$ and $\theta^\pm$ are comodule homomorphisms.

%
\subsection{Smash Product}
Suppose $H$ is a Hopf algebra and $B$ a right $H$-comodule algebra with structure map $\eta$ and $A=B^{\co \eta}$.  The algebra $B$ is  called a {\em  (right) $H$-cleft  extension} if there is a comodule map $\gamma: H \to B$ where $\gamma(1)=1$ which is convolution invertible (i.e., there is a linear map $\overline{\gamma}:H \to B$ so that $(\gamma*\overline{\gamma})(h)= \epsilon(h)\cdot 1$ and $(\overline{\gamma}*\gamma)(h)=\epsilon(h)\cdot 1$ for all $h \in H$ where $*$ is the convolution product.) Similarly one can define  a {\em left $H$-cleft extension} for $B$ a left $H$-comodule algebra. 

According to  a result of  \cite{Doi1986}, shown in  \cite{Mon1993} Proposition 7.2.3, if  $\sigma:H\otimes H \to A$  is defined by
\[
\sigma(h,k)=\sum_{(h),(k)} \gamma(h_1)\gamma(k_1)\overline{\gamma}(h_2k_2)
\]
and a left $H$-action on $A$ is given by
 \[
 h.a=\sum_{(h)}\gamma(h_1)a\overline{\gamma}(h_2),
 \]
 then the {\em (right) crossed product  $A\#_\sigma H$} is a $k$-algebra with multiplication defined by
 \[
 (a\#h)(b\#k)=\sum_{(h),(k)} a(h_1.b)\sigma(h_2,k_1)\#h_3k_2
 \]
for all $h,k \in H$ and $a,b \in A$.  
Moreover,   there is a $k$-algebra isomorphism $\Phi: A\#_\sigma H \to B$ given by $\Phi(a\#h)=a\gamma(h)$. Similar results hold for a {\em left crossed product $H\#_\sigma A$}  by making the appropriate modifications to the above.

If $\gamma$ is  a $k$-algebra homomorphism then   $\sigma$ is trivial.  For in this case we get
\[
\sigma(h,k)=\sum_{(h),(k)} \gamma(h_1k_1)\overline{\gamma}(h_2k_2)=(\gamma*\overline{\gamma})(hk)=\epsilon(hk)\cdot 1
\]
for all $h, k \in H$.  In this case, $A\#_\sigma H = A\#H$ is a {\em smash product} with multiplication
$$(a\#h)(b\#k)=\sum_{h} (a(h_1.b))\#h_2k$$

\subsection{$O_q(N^\pm)$ as Coinvariants}
\begin{thm}
\label{smash product}
Let $A^\pm=O_q(B^\pm)^{\co \eta^\pm}$ then $A^\pm\# O_q(T)\cong O_q(B^\pm)$. Similarly letting $C^\pm=O_q(B^\pm)^{\co \theta^\pm}$ then $O_q(T)\#C^\pm \cong O_q(B^\pm)$.
\end{thm}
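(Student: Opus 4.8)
The plan is to apply the cleft-extension machinery of Doi recalled in the previous subsection (\cite{Mon1993} Proposition 7.2.3): to realize $O_q(B^\pm)$ as a smash product it suffices to exhibit a convolution-invertible right $O_q(T)$-comodule map $\gamma:O_q(T)\to O_q(B^\pm)$ with $\gamma(1)=1$ that is \emph{also} a $k$-algebra homomorphism, for then the associated cocycle $\sigma$ is trivial (as shown in general just above) and the isomorphism $\Phi(a\#h)=a\gamma(h)$ identifies $A^\pm\# O_q(T)$ with $O_q(B^\pm)$. Thus the whole argument reduces to constructing one good map $\gamma$ and verifying its hypotheses on the generators of $O_q(T)$.

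First I would define $\gamma$ on generators by $\gamma(Y_{ii})=X_{ii}$ for $i=1,\dots,n+1$. This is well defined as a $k$-algebra homomorphism: recall from the discussion preceding Lemma~\ref{StructureOfOqN} that the diagonal elements $X_{ii}\in O_q(B^\pm)$ are invertible, commute pairwise, and satisfy $X_{11}\cdots X_{n+1,n+1}=1$, which are exactly the defining relations of the commutative Hopf algebra $O_q(T)\cong O(T)$ among the $Y_{ii}$. Hence the assignment respects every relation (sending $Y_{ii}^{-1}$ to $X_{ii}^{-1}$) and extends to an algebra homomorphism with $\gamma(1)=1$.

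Next I would verify the two remaining hypotheses on generators. For the right comodule property I must check $\eta^\pm\circ\gamma=(\gamma\otimes\mathrm{id})\circ\Delta_T$: since $\Delta_T(Y_{ii})=Y_{ii}\otimes Y_{ii}$ we have $(\gamma\otimes\mathrm{id})\Delta_T(Y_{ii})=X_{ii}\otimes Y_{ii}$, while $\eta^\pm(\gamma(Y_{ii}))=\eta^\pm(X_{ii})=X_{ii}\otimes Y_{ii}$ by the formula $\eta^\pm(X_{ij})=X_{ij}\otimes Y_{jj}$; as both sides are algebra maps agreeing on generators they coincide. For convolution invertibility, since $\gamma$ is an algebra homomorphism out of the Hopf algebra $O_q(T)$ the map $\overline{\gamma}:=\gamma\circ S_T$ is a two-sided convolution inverse, because $(\gamma*(\gamma\circ S_T))(h)=\gamma\!\left(\sum_{(h)} h_1 S_T(h_2)\right)=\gamma(\epsilon_T(h)1)=\epsilon_T(h)1$ and symmetrically on the other side. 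With $\gamma$ now a convolution-invertible comodule algebra map, $O_q(B^\pm)$ is a cleft $O_q(T)$-extension whose cocycle $\sigma$ is trivial, and the cited result yields $A^\pm\# O_q(T)\cong O_q(B^\pm)$.

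Finally, the same $\gamma$ handles the left-comodule statement: using $\theta^\pm(X_{ii})=Y_{ii}\otimes X_{ii}$ one checks $\theta^\pm\circ\gamma=(\mathrm{id}\otimes\gamma)\circ\Delta_T$, so $\gamma$ is a left $O_q(T)$-comodule map, and the left-handed version of the crossed-product theorem gives $O_q(T)\#C^\pm\cong O_q(B^\pm)$. I do not expect a serious obstacle: once the commutativity, invertibility, and unit-determinant properties of the $X_{ii}$ are in hand, well-definedness of $\gamma$ is automatic, and because the $Y_{ii}$ are group-like the comodule and convolution-inverse checks are transparent. The one point demanding care is bookkeeping the left/right conventions so that the correct structure maps ($\eta^\pm$ versus $\theta^\pm$, and the left versus right crossed product) are paired consistently; the genuine structural content — that the cleaving map may be taken to be an algebra homomorphism — is precisely what collapses the crossed product to an honest smash product.
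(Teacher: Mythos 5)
Your proposal is correct and follows essentially the same route as the paper: both construct the cleaving map $\gamma$ (the paper's $r^\pm$) by $Y_{ii}\mapsto X_{ii}$, take $\gamma\circ S_T$ as the convolution inverse, verify the comodule property on the grouplike generators, and invoke the Doi/Montgomery crossed-product theorem with trivial cocycle. Your added justification of well-definedness of $\gamma$ via the relations among the $X_{ii}$ is a detail the paper leaves implicit, but the argument is the same.
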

\begin{proof}
Let $r^\pm: O_q(T)\to O_q(B^\pm)$ be the $k$-algebra homomorphism such that $r^\pm(Y_{ii})=X_{ii}$.  
Define $\overline{r^\pm}: O_q(T)\to O_q(B^\pm)$ by  $\overline{r^\pm}=r^\pm S_T$.  Then 
\begin{align*}
(r^\pm*\overline{r^\pm})(Y)=\sum_{(Y)}r^\pm(Y_1)\overline{r^\pm}(Y_2)=r^\pm\left(\sum_{(Y)} Y_1S_T(Y_2)\right)=r^\pm(\epsilon_T(Y)\cdot 1)=\epsilon(Y)\cdot 1
\end{align*}
for all $Y \in O_q(T)$. Similarly, $(\overline{r^\pm}*r^\pm)(Y)=\epsilon(Y)\cdot 1$. Hence, $\overline{r^\pm}$ is the convolution inverse of $r^\pm$. 

To check that $r^\pm$ is a right $O_q(T)$-comodule map we need to show that $\eta^\pm r^\pm=(r^\pm\otimes id)\Delta_T$.  Since $r^\pm$ and $\eta^\pm$ are $k$-algebra homomorphisms, it is sufficient to show the equality holds on the $Y_{ii} \in O_q(T)$.
This holds because
$$\eta^\pm r^\pm(Y_{ii})=\eta^\pm(X_{ii})=X_{ii}\otimes Y_{ii}$$
and 
$$(r^\pm\otimes id)\Delta_T(Y_{ii})=(r^\pm\otimes id)(Y_{ii}\otimes Y_{ii})=X_{ii}\otimes Y_{ii}$$
Hence, $r^\pm$ are right $O_q(T)$-comodule homomorphisms. Therefore $O_q(B^\pm)$ is an $H$-cleft extension.  Moreover, by the above discussion, using the result of \cite{Doi1986}  there is a $k$-algebra isomorphism $\Phi^\pm: A^\pm\# O_q(T) \to O_q(B^\pm)$ where $\Phi^\pm(X\#Y)=Xr^\pm(Y)$.

Similarly, by making the appropriate changes to the above proof,  there is a $k$-algebra isomophrism $\Psi^\pm:O_q(T)\#C^\pm \to O_q(B^\pm)$ defined by $\Psi(Y\#X)=r^\pm(Y)X$. \end{proof}

 It is natural to ask what are the coinvariants for $O_q(B^\pm)$ using the structure maps $\eta^\pm$ or $\theta^\pm$?  
Note that for for any $X_{jj}^{-1}, X_{ij} \in O_q(B^\pm)$
\begin{align*}
\eta^\pm(X_{ij}X_{jj}^{-1})=\eta^\pm (X_{ij})\eta^\pm(X_{jj}^{-1})=(X_{ij}\otimes Y_{jj})(X_{jj}^{-1}\otimes Y_{jj}^{-1})=X_{ij}X_{jj}^{-1}\otimes 1
\end{align*}
 That is $X_{ij}X_{jj}^{-1}\in O_q(B^\pm)^{\co \eta^\pm}$. Similarly, $X_{ii}^{-1}X_{ij}\in O_q(B^\pm)^{\co \theta^\pm}$.  Since $\eta^\pm$ and $\theta^\pm$ are algebra homomorphism this  implies that  $O_q(N^\pm)'$ is a subalgebra of $O_q(B^\pm)^{\co \eta^\pm}$ and $O_q(N^\pm)$ is a subalgebra of $O_q(B^\pm)^{\co \theta^\pm}$. 
In fact  these algebras are exactly the coinvariants for $\eta^\pm$ and $\theta^\pm$  which we now show.

\begin{thm}
$O_q(N^\pm)'=O_q(B^\pm)^{\co \eta^\pm}$ and $O_q(N^\pm)= O_q(B^\pm)^{\co \theta^\pm}$.
\label{N is coinvariants}
\end{thm}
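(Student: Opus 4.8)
The plan is to prove the reverse inclusions, since the paragraph preceding the statement already establishes $O_q(N^\pm)'\subseteq O_q(B^\pm)^{\co\eta^\pm}=A^\pm$ and $O_q(N^\pm)\subseteq O_q(B^\pm)^{\co\theta^\pm}=C^\pm$. The main tool is the smash product decomposition of Theorem~\ref{smash product}. Writing $A^\pm\#O_q(T)=A^\pm\otimes O_q(T)$ as a vector space, the isomorphism $\Phi^\pm$ is precisely the multiplication map
\[
\mu^\pm: A^\pm\otimes O_q(T)\to O_q(B^\pm),\qquad a\otimes Y\mapsto a\,r^\pm(Y),
\]
which is therefore a linear bijection. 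The idea is to show that the \emph{a priori} smaller subalgebra $O_q(N^\pm)'$ already fills out $O_q(B^\pm)$ together with the torus, i.e.\ that the restriction of $\mu^\pm$ to $O_q(N^\pm)'\otimes O_q(T)$ is surjective; combined with the bijectivity of $\mu^\pm$ this will force $O_q(N^\pm)'=A^\pm$.

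For the surjectivity I would work with $O_q(N^+)'$ and argue that the product set $O_q(N^+)'\cdot r^+(O_q(T))$ equals $O_q(B^+)$. Every algebra generator of $O_q(B^+)$ lies in this set, since $X_{ii}=r^+(Y_{ii})$ and, for $i<j$, $X_{ij}=z_{ij}X_{jj}=z_{ij}\,r^+(Y_{jj})$. It then suffices to check that $O_q(N^+)'\cdot r^+(O_q(T))$ is closed under left multiplication by the torus generators, which reduces to moving an $X_{kk}$ past a $z_{ij}$. Using relations \eqref{OqM1} and \eqref{OqM2} together with the fact that the diagonal entries commute, one computes
\[
X_{ii}z_{ij}=q\,z_{ij}X_{ii},\qquad X_{jj}z_{ij}=q^{-1}z_{ij}X_{jj},\qquad X_{kk}z_{ij}=z_{ij}X_{kk}\ (k\neq i,j),
\]
so each torus generator conjugates $z_{ij}$ to a scalar multiple of itself. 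Hence $O_q(N^+)'\cdot r^+(O_q(T))$ is a left $r^+(O_q(T))$-submodule of $O_q(B^+)$ containing all generators, and so equals $O_q(B^+)$.

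With this in hand the conclusion is formal. The composite
\[
O_q(N^+)'\otimes O_q(T)\hookrightarrow A^+\otimes O_q(T)\xrightarrow{\ \mu^+\ }O_q(B^+)
\]
is exactly $\mu^+$ restricted to $O_q(N^+)'\otimes O_q(T)$, which we have just shown to be surjective; since $\mu^+$ is a bijection, the inclusion $O_q(N^+)'\otimes O_q(T)\hookrightarrow A^+\otimes O_q(T)$ must itself be surjective. Working over the field $k$, choose a complement $A^+=O_q(N^+)'\oplus U$, so that $A^+\otimes O_q(T)=(O_q(N^+)'\otimes O_q(T))\oplus(U\otimes O_q(T))$; surjectivity of the inclusion forces $U\otimes O_q(T)=0$, and since $O_q(T)\neq0$ this gives $U=0$, i.e.\ $O_q(N^+)'=A^+$.

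Finally, the remaining three equalities follow by symmetry. The statement $O_q(N^-)'=A^-$ is proved verbatim, while the two claims $O_q(N^\pm)=C^\pm$ are obtained in the same way using the isomorphism $\Psi^\pm$ of Theorem~\ref{smash product} and the map $Y\otimes X\mapsto r^\pm(Y)\,X$, now writing $X_{ij}=X_{ii}\,y_{ij}=r^\pm(Y_{ii})\,y_{ij}$ and moving torus generators to the left past the $y_{ij}$; alternatively one can transport the $\eta^\pm$-statements through the isomorphism $\overline{\tau}$ of Theorem~\ref{OqIsomToOq'}. I expect the only real obstacle to be the second step—verifying that $O_q(N^\pm)'$ and the torus together generate $O_q(B^\pm)$, via the closure computation above—since everything after that is a formal consequence of the cleft/smash-product structure supplied by Theorem~\ref{smash product}.
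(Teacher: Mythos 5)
Your proposal is correct and follows essentially the same route as the paper: both reduce the problem, via the bijection $\Phi^\pm$ of Theorem~\ref{smash product}, to showing that the product set $O_q(N^\pm)'\,r^\pm(O_q(T))$ is all of $O_q(B^\pm)$, and both establish this from the fact that the diagonal generators commute with the $z_{ij}$ up to scalars together with the identity $X_{ij}=z_{ij}r^\pm(Y_{jj})$. The only cosmetic point is that the relevant closure property is that $O_q(N^+)'\,r^+(O_q(T))$ is a \emph{subalgebra} (which your conjugation computation does give, since it lets you move all torus factors to the right in any product), rather than merely a left $r^+(O_q(T))$-submodule containing the generators.
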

\begin{proof}
From our discussion we have shown that $O_q(N^\pm)'\subseteq O_q(B^\pm)^{\co \eta^\pm}$.  Since the map $\Phi^\pm$ from Theorem \ref{smash product} is an isomorphism  of $A^\pm \otimes O_q(T)$ onto $O_q(B^\pm)$, it is sufficient to show that $\Phi^\pm$ maps $O_q(N^\pm)' \otimes O_q(T)$ onto $O_q(B^\pm)$. 

Notice  that
\[
\Phi^\pm(O_q(N^\pm)'\otimes O_q(T))=O_q(N^\pm)'r^\pm(O_q(T)) \subseteq O_q(B^\pm)
\]
Therefore we need only show that $O_q(N^\pm)r^\pm(O_q(T))$  is a subalgebra that contains all the $X_{ij}$ to prove the proposition. 

Let $L^\pm$ be  the subalgebra  generated by $\{X_{ii}^{\pm 1}\mid 1\leq i\leq n+1\}$  in $O_q(B^\pm)$. Note that the image of the maps $r^\pm$  from Theorem \ref{smash product} is $L^\pm$.   Using the projection homomorphism $p^\pm$,  it is straightforward to check  that $p^\pm  r^\pm=id_{O_q(T)}$ and $r^\pm  p^\pm=id_{L^\pm}$. Hence $r^\pm$ is an isomorphism from   $O_q(T)$ to $L^\pm$.  Since each of the $X_{kk}$ commutes up to a scalar with each of the $X_{ij} \in O_q(B^\pm)$ we have that $O_q(N^\pm)'r^\pm(O_q(T))$ is a subalgebra of $O_q(B^\pm)$.

Now for $X_{jj}^{-1}, X_{ij} \in O_q(B^\pm)$ with $i\neq j$ we have 
$$\Phi^\pm(X_{ij}X_{jj}^{-1} \otimes Y_{jj})=X_{ij}X_{jj}^{-1}r^\pm(Y_{jj})=X_{ij}X_{jj}^{-1}X_{jj}=X_{ij}$$
Hence, $O_q(N^\pm)'r^\pm(O_q(T)) = O_q(B^\pm)$.  Therefore $O_q(N^\pm)'=O_q(B^\pm)^{\co \eta^\pm}$.

In the same way one can use the isomorphism $\Psi^\pm$ to show $O_q(N^\pm)= O_q(B^\pm)^{\co \theta^\pm}$.
\end{proof}

Finally we have the following corollary.
\begin{cor}
$O_q(B^\pm)\cong O_q(T)\# O_q(N^\pm) \cong O_q(N^\pm)\# O_q(T)$.
\end{cor}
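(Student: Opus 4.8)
The plan is to assemble the corollary directly from the three preceding results. First I would record the two smash product decompositions from Theorem~\ref{smash product}, namely $A^\pm\#O_q(T)\cong O_q(B^\pm)$ and $O_q(T)\#C^\pm\cong O_q(B^\pm)$, where $A^\pm=O_q(B^\pm)^{\co\eta^\pm}$ and $C^\pm=O_q(B^\pm)^{\co\theta^\pm}$. Theorem~\ref{N is coinvariants} then identifies these coinvariant subalgebras explicitly as $A^\pm=O_q(N^\pm)'$ and $C^\pm=O_q(N^\pm)$. Substituting the second identification into the left-hand decomposition yields at once
\[
O_q(T)\#O_q(N^\pm)=O_q(T)\#C^\pm\cong O_q(B^\pm),
\]
which is the left half of the corollary and needs no further argument.

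For the right half, substituting $A^\pm=O_q(N^\pm)'$ gives $O_q(N^\pm)'\#O_q(T)\cong O_q(B^\pm)$, so it remains only to replace $O_q(N^\pm)'$ by $O_q(N^\pm)$ inside the (right) smash product. Here I would invoke the isomorphism $\phi\colon O_q(N^\pm)\to O_q(N^\pm)'$ of Theorem~\ref{OqIsomToOq'}, which on generators sends $y_{ij}\mapsto z_{ij}$. A bare algebra isomorphism does not automatically lift to the smash products, because $A^\pm\#O_q(T)$ also remembers the $O_q(T)$-module-algebra structure $Y.a=\sum_{(Y)} r^\pm(Y_1)\,a\,\overline{r^\pm}(Y_2)$; evaluating on $Y=Y_{kk}$ shows this action is simply conjugation $a\mapsto X_{kk}\,a\,X_{kk}^{-1}$ by the image of $r^\pm$. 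The decisive point is therefore to transport this action along $\phi$ and verify that $\phi$ is $O_q(T)$-equivariant, after which $\phi\otimes\mathrm{id}$ descends to an algebra isomorphism $O_q(N^\pm)\#O_q(T)\cong O_q(N^\pm)'\#O_q(T)$; composing with the isomorphism above gives $O_q(N^\pm)\#O_q(T)\cong O_q(B^\pm)$.

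I expect the equivariance check to be the only genuine content, and it reduces to a short scaling computation. Using relations \eqref{OqM1} and \eqref{OqM2} together with the already-noted fact that each $X_{kk}$ commutes up to a scalar with every generator $X_{ij}$ of $O_q(B^\pm)$, conjugation by $X_{kk}$ multiplies $X_{ij}$ (with $i<j$) by $q$ when $k=i$, by $q^{-1}$ when $k=j$, and fixes it otherwise. Since both $y_{ij}=X_{ii}^{-1}X_{ij}$ and $z_{ij}=X_{ij}X_{jj}^{-1}$ are scaled by exactly these same factors under conjugation by $X_{kk}$, the map $\phi$ intertwines the two conjugation actions generator-by-generator and hence is an isomorphism of $O_q(T)$-module algebras, as required. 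The $O_q(B^-)$ case is handled identically using the corresponding generators $X_{jj}^{-1}X_{ji}$ and $X_{ji}X_{ii}^{-1}$. This transport step is where essentially all of the work lies; the remainder is substitution of the identifications furnished by Theorems~\ref{smash product} and \ref{N is coinvariants}.
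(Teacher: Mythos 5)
Your proposal is correct and follows the same route as the paper: combine Theorem~\ref{smash product} with the identifications $A^\pm=O_q(N^\pm)'$ and $C^\pm=O_q(N^\pm)$ from Theorem~\ref{N is coinvariants}, then use Theorem~\ref{OqIsomToOq'} to replace $O_q(N^\pm)'$ by $O_q(N^\pm)$ in the right-hand smash product. Your additional verification that the isomorphism $y_{ij}\mapsto z_{ij}$ intertwines the conjugation actions of $O_q(T)$ is a point the paper's proof silently elides (it invokes only the bare algebra isomorphism $O_q(N^\pm)\cong O_q(N^\pm)'$), and your scaling computation correctly closes that small gap.
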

\begin{proof}
The first isomorphism follows directly from Theorem \ref{smash product} and Theorem \ref{N is coinvariants}.  For the second isomorphism we note that Theorem \ref{smash product} and Theorem \ref{N is coinvariants} imply that $O_q(B^\pm) \cong  O_q(N^\pm)'\# O_q(T)$.  Since from Theorem \ref{OqIsomToOq'} we have $O_q(N^\pm) \cong O_q(N^\pm)'$ the second isomorphism holds.
\end{proof}

\section{$C_q$: The Algebra of Matrix Coefficients}
Having investigated $O_q(B^\pm)$ and $O_q(N^\pm)$ we now switch to  the quantum function algebra or algebra of matrix coefficients $C_q$. In order to do this though we first need some background.

\subsection{Root and Weight Lattices}
Let $C=(c_{ij})$ be the Cartan matrix for the Lie algebra $\mathfrak{sl}_{n+1}$.  Let $\Phi$ be a set of roots for the Lie algebra and  $\Pi=\{\alpha_1,\cdots, \alpha_n\}$ be some choice of positive simple roots. Define 
\begin{align*}
\Q\Pi&:=\left\{ \sum_{i=1}^n r_i\alpha_i \mid r_i \in \Q,\, \alpha_i \in \Pi\right\}& &
\Z\Pi:=\left\{ \sum_{i=1}^n k_i\alpha_i \mid k_i \in \Z ,\, \alpha_i \in \Pi\right\}
\end{align*}
 called  the {\em rational root lattice} and  {\em root lattice} respectively.   
There is a  nondegenerate bilinear form $(-_,-)$ on $\Q\Pi$  determined by $(\alpha_i,\alpha_j)=c_{ij}$.  
There exists $\omega_i \in \Q\Pi$ called a {\em fundamental weight} with the property that  $(\omega_i,\alpha_j)=\delta_{ij}$ for all $\alpha_j\in \Pi$.  In the present case of $\frsl_{n+1}$ we have  

\begin{align*}
\omega_i=\frac{1}{n+1}\big((n-i+1)\alpha_i&+2(n-i+1)\alpha_2+\ldots (i-1)(n-i+1)\alpha_{i-1}\\ &+i(n-i+1)\alpha_i+i(n-i)\alpha_{i+1}+\ldots +i\alpha_n
\big)
\end{align*}
Therefore using the convention that  $\omega_0=\omega_{n+1}=0$ we have
\[
\alpha_i=-\omega_{i-1}+2\omega_i-\omega_{i+1}
\]
Define 
\begin{align*}
\Lambda:=\left\{\sum_{i=1}^n k_i\omega_i \;|\; k_i \in \Z\right\}& &
\Lambda^+:=\left\{\sum_{i=1}^n k_i\omega_i \;|\; k_i \in \Z_{\geq 0}\right\}
\end{align*} 
These are called the {\em weight lattice} and {\em positive weight lattice} respectively.
Note that  $\Z\Pi\subset \Lambda$ and that for $\lambda \in \Lambda$ and $\mu \in \Z\Pi$  we have $(\lambda, \mu)\in \Z$.

Finally,  there is a partial order on $\Lambda$ by defining  $\lambda\geq 0$ if there exists  $k_i \in \Z_{\geq 0}$ so that 
$\lambda=k_1\alpha_1+k_2\alpha_2+\cdots+k_n\alpha_n$. Hence, using this definition we have for $\mu,\lambda \in \Lambda$ that $\mu \geq\lambda$ if and only if $\mu-\lambda \geq0$.

\subsection{$U_q(\frsl_{n+1})$ : Structure and  Subalgebras}
We now give a presentation of the Hopf algebra $U_q(\frsl_{n+1})$ found in \cite{Jan1995} section 4.3 as well as  some other important algebras  related to it.

 Denote by $U_q=U_q(\frsl_{n+1})$  the algebra generated by the elements $K_\lambda$  for $\lambda \in \Z\Pi$ and  $E_{\alpha_i}, F_{-\alpha_i}$  where $\alpha_i \in \Pi$ presented with the following relations:

\begin{align*}
    K_\lambda K_{-\lambda}&=K_0=1                                                                       &     K_\lambda K_\mu&=K_\mu K_\lambda =K_{\lambda+\mu}\\
     K_\lambda E_{\alpha_i}=&q^{(\lambda,\alpha_i)}E_{\alpha_i} K_\lambda          & K_\lambda F_{-\alpha_i}&=q^{(\lambda,-\alpha_i)}F_{-\alpha_i} K_\lambda\\
E_{\alpha_i}E_{\alpha_j}=&E_{\alpha_j}E_{\alpha_i} \;\mathrm{for}\; |i-j|>2&
F_{-\alpha_i}F_{-\alpha_j}&=F_{-\alpha_j}F_{-\alpha_i} \;\mathrm{for}\; |i-j|>2
\end{align*}
\vspace{-.28in}
\begin{align*}
 [E_{\alpha_i},F_{-\alpha_j}]=\delta_{ij}&\frac{K_{\alpha_i}-K_{-\alpha_j}}{q-q^{-1}}  \\
E_{\alpha_i}^2E_{\alpha_j}-(q+q^{-1})E_{\alpha_i}E_{\alpha_j}E_{\alpha_i}+&E_{\alpha_j}E_{\alpha_i}^2=0 \;\mathrm{for}\; |i-j|=1\\
F_{-\alpha_i}^2F_{-\alpha_j}-(q+q^{-1})F_{-\alpha_i}F_{-\alpha_j}F_{-\alpha_i}+&F_{-\alpha_j}F_{-\alpha_i}^2=0 \;\mathrm{for}\; |i-j|=1
\end{align*}

We  use the notation  $E_i=E_{\alpha_i}$, $F_i=F_{-\alpha_i}$, and $K_i=K_{\alpha_i}$.  Moreover   $K_{\alpha_i}$ is invertible with inverse $K_{-\alpha_i}$ therefore $K^{-1}_i=K_{-\alpha_i}$. 

$U_q$ is  a Hopf algebra with comultiplication $\Delta$, counit $\epsilon$, and antipode $S$ determined by the following:
\begin{align*}
\Delta(E_i)&=K_i\otimes E_i +E_i\otimes 1& &\epsilon(E_i)=0& &S(E_i)=-K_i^{-1}E_i\\
\Delta(F_i)&=F_i\otimes K_i^{-1}+1\otimes F_i& &\epsilon(F_i)=0& & S(F_i)=-F_iK_i\\
\Delta(K_\lambda)&=K_\lambda\otimes K_{\lambda}& & \epsilon(K_\lambda)=1& & S(K_\lambda)=K_{-\lambda}
\end{align*}

Denote by
$U_q^{\geq 0}$ (resp. $U_q^{\leq 0}$) the subalgebra of $U_q$ generated by the $K_\lambda$ for $ \lambda \in \Z\Pi$ and $E_{i}$ (resp. $F_i$) for $i=1,2,\ldots, n$. It is a sub Hopf algebra  of $U_q$ and is called the {\em quantized positive (resp. negative)  Borel subalgebra } of $U_q$.  
Denote by $U_q^0$ the subalgebra of $U_q$ generated by $K_\lambda$ where $\lambda \in \Z\Pi$.  This algebra is called the {\em quantum torus of $U_q$.} Finally, denote by  $U_q^+$ (resp. $U_q^-$) the subalgebra generated by the $E_i$ (resp. $F_i$). This is referred to as the {\em positive (resp.  negative)  nilpotent subalgebra of $U_q$}.

We will also be interested in a slightly larger algebra  than $U_q$  denoted by $\check{U}_q$.  Specifically, $\check{U}_q$ is generated by  $K_\lambda$ for $\lambda \in \Lambda$ instead of just $\lambda \in \Z\Pi$ and the $E_i$,  $F_i$, and is  presented with the same relations  above where we now allow $\lambda \in \Lambda$.  It is also a Hopf algebra with the same description as the Hopf algebra structure for $U_q$ again allowing $\lambda \in \Lambda$.

The positive (resp. negative) Borel subalgebra of $\check{U}_q$  denoted by $\check{U}_q^{\geq0}$ (resp. $\check{U}_q^{\leq 0}$) are defined analogously as above.  The quantum nilpotent subalgebra $\check{U}_q^+$ (resp. $\check{U}_q^-$), that is, the subalgebra generated by the $E_i$ (resp. $F_i$), is exactly the same as $U_q^+$ (resp. $U_q^-$).


\subsection{Weights and Weight Spaces}
Let $V$ be a left $U_q$-module.   If for a nonzero vector $v\in V$ there  is a   $\lambda \in \Lambda$ and homomorphism $\sigma\in \hom (\Z\Pi, \{\pm1\})$ so that  for all $\mu \in \Z\Pi$ we have
$$K_{\mu}v=\sigma(\mu) q^{(\lambda, \mu)}v,$$
we call $v$ a  {\em weight vector} with {\em weight $\lambda$} of {\em  type $\sigma$}.  We denote the set of weights of $V$ by $\Omega(V)$. 
The set
 $$V_{(\lambda,\sigma)}=\big\{ v\in V \mid K_\mu v=\sigma(\mu)q^{(\lambda,\mu)}v\; \mathrm{for \; all }\; K_\mu \in U_q^0\big\}$$
is a subspace of $V.$ We call $V_{(\lambda, \sigma)}$ a {\em weight space} of $V$.  For the special case where  $\sigma= {\bf 1}$  (that is, $\sigma(\mu)=1$ for all $\mu \in \Lambda$) we denote    $V_{(\lambda,\mathbf{1})}$ by $V_\lambda$.

If  a weight vector $v \in V$  has the property that  $E_iv=0$ for all  the $E_i$ then $v$ is called a {\em maximal weight vector} for $V$.  Analogously, a  weight vector $v\in V$ is a {\em minimal weight vector} if $F_iv=0$ for all the $F_i$.

For  $\sigma \in \hom(\Z\Pi, \pm1)$ denote $V_\sigma=\sum_{\lambda \in \Omega(V)} V_{(\lambda,\sigma)}$.
Proposition 5.1 in  \cite{Jan1995}   implies that any $U_q$-module can be written 
 $$V=\bigoplus _{\sigma} V_\sigma
 $$
  Hence, if $V$ is an irreducible finite dimensional module then $V=V_\sigma$ and so $V$ has a well defined type $\sigma$.

\subsection{The Category  $\mathscr{C}_q$}
Define the category $\mathscr{C}_q$ to be the full subcategory of \nobreak{$U_q$-modules} whose objects are the  finite dimensional  left $U_q$-modules of type {\bf 1}.   
We note that for  $V$ and $W$  objects in $\mathscr{C}_q$ and    $V_\lambda$ and $W_\mu$  weight spaces  then $V_\lambda\otimes W_\mu \subseteq (V\otimes W)_{\lambda +\mu}$ and $(V_\lambda)^*=(V^*)_{-\lambda}$. (\cite{Jan1995} section 5.3)

 We collect some basic facts about  $\mathscr{C}_q$ which can be found in \cite{Jan1995} chapters 5 and 6.

\begin{thm}
\label{V is direct sum of weight spaces}
If $V\in \mathscr{C}_q$ then $V$ is the direct sum of its weight spaces.  
\end{thm}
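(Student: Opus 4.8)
The plan is to show $V=\bigoplus_{\lambda}V_\lambda$ by first observing that $V$ is already spanned by weight vectors and then checking that the sum over distinct weights is direct. The spanning comes for free from the hypotheses: since $V$ has type $\mathbf{1}$ we have $V=V_{\mathbf{1}}$, and the decomposition $V=\bigoplus_\sigma V_\sigma$ quoted from Proposition 5.1 of \cite{Jan1995}, combined with the definition $V_{\mathbf{1}}=\sum_{\lambda\in\Omega(V)}V_{(\lambda,\mathbf{1})}=\sum_\lambda V_\lambda$, gives immediately that $V=\sum_\lambda V_\lambda$. Thus the entire content of the theorem reduces to proving that this sum is direct.

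For directness I would argue via linear independence of simultaneous eigenvectors of the commuting family $\{K_\mu\mid\mu\in\Z\Pi\}$. On the weight space $V_\lambda$ every $K_\mu$ acts as the scalar $q^{(\lambda,\mu)}$, so $V_\lambda$ is precisely the common eigenspace attached to the character $\chi_\lambda:U_q^0\to k$ determined by $\chi_\lambda(K_\mu)=q^{(\lambda,\mu)}$. If the $\chi_\lambda$ are pairwise distinct, then they are linearly independent as functions on $U_q^0$ (Dedekind independence of characters, or equivalently the standard fact that eigenvectors of a commuting family with distinct eigenvalue systems are independent), and this forces $\sum_\lambda V_\lambda$ to be a direct sum.

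The one point that must be checked—and the main, if modest, obstacle—is that the assignment $\lambda\mapsto\chi_\lambda$ is injective on $\Lambda$. Suppose $\chi_\lambda=\chi_{\lambda'}$, so that $q^{(\lambda-\lambda',\mu)}=1$ for every $\mu\in\Z\Pi$. Because $q$ is not a root of unity, this yields $(\lambda-\lambda',\mu)=0$ for all $\mu\in\Z\Pi$; since $\Z\Pi$ spans $\Q\Pi$ over $\Q$, the form $(-,-)$ is nondegenerate on $\Q\Pi$, and $\lambda-\lambda'\in\Lambda\subset\Q\Pi$, this forces $\lambda=\lambda'$. This is exactly the step that invokes the running hypotheses that $q$ is not a root of unity and that the bilinear form is nondegenerate, and it is what turns the weight-space labeling into an honest direct-sum decomposition rather than a mere bookkeeping device.
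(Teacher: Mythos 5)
The paper does not actually prove this statement: Theorem~\ref{V is direct sum of weight spaces} is listed among the ``basic facts about $\mathscr{C}_q$'' imported wholesale from \cite{Jan1995}, Chapters 5 and 6, so there is no in-paper argument to compare yours against. On its own terms your proof is correct, but it is worth being honest about where the weight is carried. The substantive half of the theorem --- that a finite-dimensional module is \emph{spanned} by weight vectors, i.e.\ that the commuting family $\{K_\mu\}$ acts semisimply --- is exactly what you delegate to the quoted Proposition 5.1 of \cite{Jan1995} together with the type-$\mathbf{1}$ hypothesis $V=V_{\mathbf{1}}$; so the only part you prove from scratch is directness. That part is done correctly and is the right argument: the key observation is that $\lambda\mapsto\chi_\lambda$ is injective because $q$ is not a root of unity and $(\lambda,\mu)\in\Z$ for $\mu\in\Z\Pi$ (indeed one can shortcut your nondegeneracy appeal by noting $(\lambda,\alpha_j)$ is just the $j$-th coefficient of $\lambda$ in the $\omega_i$-basis, since $(\omega_i,\alpha_j)=\delta_{ij}$), after which linear independence of vectors lying in common eigenspaces for distinct characters of a commuting family is standard. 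In short: no gap in the logic, but the proof is best read as a reduction of the theorem to the cited semisimplicity statement plus a self-contained verification of directness, rather than as an independent proof of the whole result.
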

\begin{thm}
\label{irreducible modules}
Let $V\in \mathscr{C}_q$ be  irreducible. 
\begin{enumerate}[(i)]
\item $V$ contains a maximal weight vector $v_\lambda$ with weight $\lambda \in \Lambda^+$ .
\item $\dim V_\lambda=1$. 
\item If $\nu$ is any other weight for $V$ then $\nu<\lambda$. \item $U_q^{\leq 0}v_\lambda=V$.
\end{enumerate}

$V$ also  contains a minimal weight vector $v_{-\mu}$ with weight  $-\mu\in -\Lambda^+$  with analogous properties.
\end{thm}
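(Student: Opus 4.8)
The plan is to reproduce the standard highest-weight argument in the quantum setting, taking as given two structural facts from \cite{Jan1995}: the triangular decomposition $U_q = U_q^- U_q^0 U_q^+$ (equivalently $U_q^+ U_q^0 U_q^-$) and the classification of finite-dimensional type-$\mathbf{1}$ modules over the rank-one subalgebras. First I would establish the existence in (i). By Theorem~\ref{V is direct sum of weight spaces}, $V=\bigoplus_{\nu\in\Omega(V)}V_\nu$, and finite-dimensionality makes $\Omega(V)$ a finite nonempty subset of $\Lambda$, hence it has a maximal element $\lambda$ in the partial order. Pick $0\neq v_\lambda\in V_\lambda$. The relation $K_\mu E_i=q^{(\mu,\alpha_i)}E_iK_\mu$ shows that $E_i$ sends $V_\nu$ into $V_{\nu+\alpha_i}$; since $\lambda+\alpha_i>\lambda$ cannot lie in $\Omega(V)$ by maximality, we get $E_iv_\lambda=0$ for every $i$, so $v_\lambda$ is a maximal weight vector.

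Next I would show $\lambda\in\Lambda^+$, equivalently $(\lambda,\alpha_i)\in\Z_{\geq 0}$ for each $i$. Since $(\omega_j,\alpha_i)=\delta_{ij}$, the number $(\lambda,\alpha_i)$ is exactly the $i$-th coefficient of $\lambda$ in the $\omega$-basis, and it lies in $\Z$ because $\lambda\in\Lambda$ and $\alpha_i\in\Z\Pi$. To see it is nonnegative I restrict the action to the copy of $U_q(\frsl_2)$ generated by $E_i,F_i,K_i^{\pm1}$. The submodule of $V$ generated by $v_\lambda$ under this subalgebra is finite dimensional and of type $\mathbf{1}$, with $K_i$ acting on $v_\lambda$ by $q^{(\lambda,\alpha_i)}$; the finite-dimensional $U_q(\frsl_2)$ theory then forces $(\lambda,\alpha_i)$ to be a nonnegative integer, giving $\lambda\in\Lambda^+$.

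For (ii)--(iv) I would combine irreducibility with the triangular decomposition. As $U_qv_\lambda$ is a nonzero submodule, irreducibility gives $V=U_qv_\lambda=U_q^-U_q^0U_q^+v_\lambda$. Since $v_\lambda$ is maximal of weight $\lambda$, the augmentation ideal of $U_q^+$ annihilates it and $U_q^0$ acts by scalars, so $U_q^+v_\lambda=kv_\lambda$ and $U_q^0v_\lambda=kv_\lambda$; hence $V=U_q^-v_\lambda$, and because $U_q^{\leq 0}=U_q^-U_q^0$ this is precisely (iv). Each $F_i$ lowers weight by $\alpha_i$, so every monomial in the $F_i$ carries $v_\lambda$ to weight $\lambda-\sum_i m_i\alpha_i$ with $m_i\in\Z_{\geq 0}$; therefore every weight $\nu$ of $V$ satisfies $\nu\leq\lambda$, the only contribution to weight exactly $\lambda$ is the constant term, giving $\dim V_\lambda=1$ (ii), and any other weight is strictly smaller, giving (iii).

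The minimal weight vector is obtained by running the same argument with the reversed order and the opposite ordering $U_q=U_q^+U_q^0U_q^-$ of the triangular decomposition: take $-\mu\in\Omega(V)$ minimal, note $F_iv_{-\mu}=0$ forces $U_q^-v_{-\mu}=kv_{-\mu}$ so that $U_q^{\geq 0}v_{-\mu}=V$, and apply the rank-one classification to deduce $\mu\in\Lambda^+$. I expect the main obstacle to be the two imported structural inputs rather than the bookkeeping: establishing $\lambda\in\Lambda^+$ genuinely requires the finite-dimensional $U_q(\frsl_2)$ theory, and the reduction $V=U_q^-v_\lambda$ relies on the triangular (PBW-type) decomposition of $U_q$; once these are in hand, parts (ii), (iii), and (iv) follow from weight-space bookkeeping together with a single use of irreducibility.
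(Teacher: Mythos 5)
Your argument is correct and is essentially the standard highest-weight argument that the paper itself does not spell out but simply imports by citation from \cite{Jan1995}, Chapters 5--6. The two structural inputs you isolate --- the triangular decomposition $U_q=U_q^-U_q^0U_q^+$ and the rank-one $U_q(\frsl_2)$ classification forcing $(\lambda,\alpha_i)\in\Z_{\geq 0}$ --- are precisely the ingredients used in the cited source, and the remaining steps (a finite poset of weights has a maximal element, $E_i$ raises weight by $\alpha_i$, $U_q^+v_\lambda=kv_\lambda$ so $V=U_q^-v_\lambda$, and monomials in the $F_i$ strictly lower weight, giving (ii)--(iv)) are all sound.
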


From  Theorem \ref {irreducible modules}, if  $\lambda$ is  the weight of a maximal weight vector $v_\lambda$ then we call  $v_\lambda$   {\em a highest  weight vector} and $\lambda$  {\em  the highest weight} of $V$.     Analogously  $v_{-\mu}$ is called {\em a lowest weight vector} and $-\mu$ called  {\em the lowest weight} of $V$.  

\begin{thm}
\label{Every lambda there is a highest weight}
For every $\lambda \in \Lambda^+$ there is an irreducible module $V \in \mathscr{C}_q$ which has  highest weight $\lambda$.  Moreover, if $W$ is any other irreducible module in $\mathscr{C}_q$ with highest weight  $\mu$, then $V\cong W$ if and only if $\mu =\lambda$. 
\end{thm}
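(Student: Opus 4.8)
The statement asserts that the map sending an irreducible $V \in \mathscr{C}_q$ to its highest weight descends to a bijection from isomorphism classes onto $\Lambda^+$. Theorem~\ref{irreducible modules} already shows this map is well defined and takes values in $\Lambda^+$, so the two things left to prove are injectivity (uniqueness) and surjectivity (existence).

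\emph{Uniqueness.} If $V \cong W$, the isomorphism commutes with the action of $U_q^0$ and hence carries each weight space onto the corresponding one, so $\Omega(V) = \Omega(W)$; as the highest weight is by definition the maximum of this set, $\lambda = \mu$. For the converse I would take $V, W$ irreducible with a common highest weight $\lambda$ and highest weight vectors $v_\lambda, w_\lambda$, and consider the submodule $N = U_q \cdot (v_\lambda, w_\lambda)$ of $V \oplus W$. The generator is annihilated by every $E_i$ and lies in weight $\lambda$, so it is a highest weight vector of $N$; since the lowering operators strictly decrease the weight, $N_\lambda = k\,(v_\lambda, w_\lambda)$ is one-dimensional. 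Each coordinate projection restricts to a surjection $p_V \colon N \to V$ and $p_W \colon N \to W$ (their images are $U_q v_\lambda = V$ and $U_q w_\lambda = W$ by irreducibility). Were $\ker p_V = N \cap (0 \oplus W)$ nonzero, it would be a nonzero submodule of the irreducible $W$, hence all of $0 \oplus W$, and then $N_\lambda$ would contain both $(v_\lambda, w_\lambda)$ and $(0, w_\lambda)$, contradicting $\dim N_\lambda = 1$. Thus $p_V$ and, symmetrically, $p_W$ are isomorphisms, giving $V \cong N \cong W$.

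\emph{Existence.} This is the substantive half, and the plan is the usual highest weight construction. For $\lambda \in \Lambda^+$ let $k_\lambda$ be the one-dimensional $U_q^{\geq 0}$-module on which each $E_i$ acts as $0$ and each $K_\mu$ (for $\mu \in \Z\Pi$) acts by the scalar $q^{(\lambda,\mu)} \in k^\times$, and form the induced (Verma) module $M(\lambda) = U_q \otimes_{U_q^{\geq 0}} k_\lambda$. By the triangular decomposition $M(\lambda)$ is free over $U_q^-$, is the direct sum of its weight spaces, has $\lambda$ as its unique maximal weight, and possesses a unique maximal proper submodule; its quotient $L(\lambda)$ is therefore irreducible with highest weight $\lambda$. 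Because the $K_\mu$ act by $q^{(\lambda,\mu)}$ with no sign, $L(\lambda)$ is automatically of type $\mathbf{1}$, so once it is shown to be finite-dimensional we may take $V = L(\lambda)$ and existence follows.

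The finite-dimensionality of $L(\lambda)$ is the main obstacle, and it is exactly where dominance $\lambda \in \Lambda^+$ is used; I would not expect to extract it from the earlier parts of the excerpt. Following \cite{Jan1995} chapters~5 and~6, I would reduce to rank one: for each $i$ the triple $E_i, F_i, K_{\alpha_i}$ spans a copy of $U_q(\frsl_2)$, and a direct computation with the quantum commutation relations shows that $F_i^{\,d_i+1} v_\lambda$, with $d_i = (\lambda,\alpha_i) \in \Z_{\geq 0}$, is again annihilated by all the $E_j$, i.e.\ a maximal vector of $M(\lambda)$. Passing to $L(\lambda)$ kills these vectors, so every $F_i$ (and, dually, every $E_i$) acts locally nilpotently; the quantum Serre relations then make the set of weights stable under the Weyl group, and a Weyl-stable subset of $\{\mu \leq \lambda\}$ with finite-dimensional weight spaces is finite. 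Hence $\dim L(\lambda) < \infty$. This $\frsl_2$-integrability argument, rather than anything already proved above, is the technical heart of the existence claim.
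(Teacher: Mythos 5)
The paper does not actually prove this statement: it is listed among the ``basic facts about $\mathscr{C}_q$'' imported wholesale from \cite{Jan1995}, chapters 5 and 6. Your proposal correctly reconstructs the standard argument given there --- uniqueness via the cyclic submodule of $V\oplus W$ generated by $(v_\lambda,w_\lambda)$ together with one-dimensionality of its $\lambda$-weight space, and existence via the Verma module $M(\lambda)$, its irreducible quotient $L(\lambda)$, the maximal vectors $F_i^{\,(\lambda,\alpha_i)+1}v_\lambda$, and Weyl-group invariance of the set of weights --- so it matches the approach the paper implicitly relies on. The only imprecision worth flagging is that the Weyl-group stability of the weights comes from the local nilpotence of the $E_i$ and $F_i$ (i.e.\ the integrable $\frsl_2$-theory applied to each simple root), not from the quantum Serre relations as such; with that adjustment the sketch is the standard complete proof.
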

From Theorem \ref{Every lambda there is a highest weight} we denote an irreducible module with highest weight $\lambda$  by $V(\lambda)$ and, using this notation, we have   $V(\mu)\cong V(\lambda)$ if and only if $\mu=\lambda$.

\subsubsection{The Module $V(\omega_1)$}
Since the module $V(\omega_1)$ is so integral for what follows, we examine it a bit more closely.  

It is known that  $\dim V(\omega_1)=n+1$ (\cite{Jan1995} section 5.15)  and there exists a  basis  $\{e_1, e_2, \ldots, e_{n+1}\}$  so that each $e_j$ is a weight vector  with  weight $\omega_1-\alpha_1+\cdots-\alpha_{j-1}$. Moreover, we can select the $e_j$ so that  $E_ie_j=\delta_{i,j-1}e_{j-1}$ and $F_ie_j=\delta_{i,j}e_{j+1}$. The basis is well-ordered with respect to weight and  $e_1$ has highest weight $\omega_1$.  Also note that  $i\leq j$ if and only if  $\mathrm{wt}\; e_i\geq \mathrm{wt}\; e_j$.     

 In $\mathfrak{sl}_{n+1}$  since  $\alpha_i=-\omega_{i-1}+2\omega_i-\omega_{i+1}$ for each $i=1,2,\ldots,n$, using the convention $\omega_0=\omega_{n+1}=0$,  we have the weight of $e_j$ is $\beta_j:=-\omega_{j-1}+\omega_{j}$.  Hence 
 for $\lambda \in \Z\Pi$ and $1\leq j\leq n+1$   we get
\begin{equation}
K_\lambda e_j=q^{(\beta_j,\lambda)}e_j \label{Kaction}
\end{equation}

Moreover, note that if $I=({i_1},i_2,\cdots i_{N-1},{i_N})$  where each $i_k \in \{1,2,\ldots, n\}$, adopting the convention 
\begin{align*}
E_I=E_{i_1}E_{i_2}\cdots E_{i_{N-1}}E_{i_N}& &\mathrm{and}& &F_I=F_{i_1}F_{i_2}\cdots F_{i_{N-1}}F_{i_N}
\end{align*}
we have 
\begin{equation}\label{Eaction} E_Ie_{j}=\delta_{i_1,j-N+1}\delta_{i_2,j-N+2}\cdots \delta_{i_{N-1},j-2}\delta_{i_N,j-1}e_{i_1}\end{equation}
Therefore if $i< j$ we have $E_Ie_j=e_i$ if and only if $I=(i,{i+1},\cdots, {j-2},{j-1})$  and $E_I e_j=e_j$ if and only if $I=\emptyset$. Furthermore, if  $I$ is any other finite sequence of elements from $\{1,2,\ldots, n+1\}$ then  $E_Ie_j=0$ for all $j$.

\subsection{ Definition and Structure  of $C_q$}
For each $V\in \mathscr{C}_q$ and   $f \in V^*$ and $v \in V,$ we define the linear functional $c^V_{f,v}\in U_q^*$ by $c^V_{f,v}(u)=f(uv)$ for all $u \in U_q$.
 We call  a functional $c^V_{f,v}$ a {\em coordinate function} or a {\em matrix coefficient for $V$}. Define
 $$C_q:=\left\{c^V_{f,v} \mid V\in \mathscr{C}_q,\, f \in V^*, \,v \in V\right\}$$ 
 Since the annihilator of $V$ is contained in the kernel of $c^V_{f,v}$ and since $V$ is finite dimensional, $c^V_{f,v}\in U_q^\circ$ where $U_q^\circ$ is the Hopf dual of $U_q$. 
Hence, $C_q \subseteq U_q^\circ$.  

Note that using the standard addition and multiplication from $U_q^\circ$, we have
 \begin{align*}
c^V_{f,v}+c^W_{g,w}&=c^{V\oplus W}_{f\oplus g,v\oplus w}& &
c^V_{f,v}c^W_{g,w}=c^{V\otimes W}_{f\otimes g,v\otimes w}
\end{align*}
 Since $\mathscr{C}_q$ is closed under finite direct sums and tensor products (\cite{Jan1995} section 5.4) it follows that $C_q$ is a $k$-subalgebra of $U_q^\circ$. 

For the irreducible modules $V(\mu)$ in $\mathscr{C}_q$, we will often denote 
 $c^{V(\mu)}_{f,v}$ simply by $c^\mu_{f,v}$. Moreover, for $v\in V(\mu)$ and $f\in V(\mu)^*$ weight vectors  with weights $\nu $ and $-\lambda$ respectively,  we may denote $c^{\mu}_{f,v}$ by either $c^\mu_{f,\nu}$ or $c^\mu_{-\lambda,\nu}$.     Hence, any proposition written with $c^\mu_{-\lambda,\nu}$ will be independent of any choice of weight vectors $v$ or $f$.

 Denote the natural dual basis $\left\{f^1,f^2,\ldots, f^{n+1}\right\}$ for $V(\omega_1)^*$ corresponding to $\{e_1,e_2,\ldots, e_{n+1}\}$ so that $f^i(e_j)=\delta_{ij}$.  The next theorem follows from  \cite{Hod1993} Theorem 1.4.1.

\begin{thm}
\label{Cq and Oq isom}
$\left\{c^{\omega_1}_{f^i,e_j}\mid i,j =1,2, \ldots n+1\right\}$ is a generating set for the algebra $C_q$.
Moreover, there exists a $k$-algebra isomorphism $\kappa: O_q(\SL_{n+1}) \to C_q$ so that    $\kappa(X_{ij})=c^{\omega_1}_{f^i,e_j}$.
\end{thm}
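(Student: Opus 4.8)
The plan is to define $\kappa$ on the free algebra generated by the symbols $X_{ij}$ by $X_{ij}\mapsto c^{\omega_1}_{f^i,e_j}$, check that the images satisfy relations \eqref{OqM1}--\eqref{OqM5} so that $\kappa$ descends to a homomorphism on $O_q(\SL_{n+1})$, and then prove that this $\kappa$ is both surjective and injective. The two workhorses throughout are the product rule $c^V_{f,v}c^W_{g,w}=c^{V\otimes W}_{f\otimes g,\,v\otimes w}$ and the explicit action of the generators of $U_q$ on $V(\omega_1)$ recorded in \eqref{Kaction}, \eqref{Eaction} together with $F_ie_j=\delta_{ij}e_{j+1}$ and the coproducts $\Delta(E_i)=K_i\otimes E_i+E_i\otimes 1$, $\Delta(F_i)=F_i\otimes K_i^{-1}+1\otimes F_i$, $\Delta(K_\lambda)=K_\lambda\otimes K_\lambda$.

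I would first dispose of the generation claim, which also yields surjectivity of $\kappa$. By Theorem~\ref{V is direct sum of weight spaces} the category $\mathscr{C}_q$ is semisimple, so each $c^V_{f,v}$ is a sum of matrix coefficients of irreducibles; and if $W$ is a direct summand of $V$ then for $w\in W$, $g\in W^*$ one has $c^W_{g,w}=c^V_{\tilde g,w}$, where $\tilde g$ extends $g$ by zero on a complementary submodule (this uses only $uw\in W$). Now every fundamental module $V(\omega_i)$ occurs as the $q$-antisymmetric summand $\Lambda^i_q V(\omega_1)$ of $V(\omega_1)^{\otimes i}$, hence every irreducible $V(\lambda)$ with $\lambda=\sum_i k_i\omega_i$ is a summand of $V(\omega_1)^{\otimes N}$ with $N=\sum_i i\,k_i$ (Theorem~\ref{Every lambda there is a highest weight}). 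Expanding $f$ and $v$ of such a coefficient in the tensor bases and applying the product rule repeatedly writes it as a $k$-linear combination of products $c^{\omega_1}_{f^{i_1},e_{j_1}}\cdots c^{\omega_1}_{f^{i_N},e_{j_N}}$, so the $c^{\omega_1}_{f^i,e_j}$ generate $C_q$.

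Next I would verify the defining relations. Relations \eqref{OqM1}--\eqref{OqM4} involve only twofold products, which are matrix coefficients of $V(\omega_1)\otimes V(\omega_1)$, so each relation is equivalent to a finite-dimensional identity in this module. The clean tool is that for any $U_q$-module endomorphism $\theta$ of a module $M$ one has $c^M_{F,\theta w}=c^M_{\theta^* F,w}$; applying this to the braiding endomorphism of $V(\omega_1)^{\otimes 2}$ (equivalently, using the projections onto the irreducible summands $V(2\omega_1)$ and $V(\omega_2)=\Lambda^2_q V(\omega_1)$) and the explicit actions \eqref{Kaction}, \eqref{Eaction} reads off exactly the quantum-matrix relations. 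Relation \eqref{OqM5} I would treat separately and more conceptually: the vector $w=\sum_{\sigma\in\Sym_{n+1}}(-q)^{\ell(\sigma)}e_{\sigma(1)}\otimes\cdots\otimes e_{\sigma(n+1)}$ spans a one-dimensional submodule of $V(\omega_1)^{\otimes(n+1)}$ whose weight is $\beta_1+\cdots+\beta_{n+1}=-\omega_0+\omega_{n+1}=0$, i.e.\ the trivial module, so $u$ acts on $w$ by $\epsilon(u)$. Pairing with $f^1\otimes\cdots\otimes f^{n+1}$, where only the identity permutation survives with coefficient $1$, shows the image of the quantum determinant is the counit $\epsilon$, which is the identity of $C_q$. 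This makes $\kappa$ a well-defined surjective $k$-algebra homomorphism.

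The remaining, and genuinely hard, step is injectivity: everything above only shows \eqref{OqM1}--\eqref{OqM5} are \emph{valid} relations, not that they are a \emph{complete} set. I would attack this by comparing $k$-bases. The algebra $O_q(\SL_{n+1})$ carries the PBW-type basis of lexicographically ordered monomials in the $X_{ij}$ (with $X_{n+1,n+1}$ eliminated via the quantum determinant), the same basis invoked in the proof of Lemma~\ref{StructureOfOqN}; it then suffices to show that the images of these monomials are linearly independent in $C_q\subseteq U_q^\circ$. I would test a hypothetical dependence by evaluating on suitable elements of $U_q$, using the weight grading to separate monomials of distinct weights and the nondegeneracy of the evaluation pairing $C_q\times U_q\to k$ on each weight space. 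This nondegeneracy (a quantum Peter--Weyl statement) together with the semisimplicity of $\mathscr{C}_q$ is precisely what \cite{Hod1993} Theorem 1.4.1 supplies, so the cleanest route once well-definedness and surjectivity are in hand is to invoke that result. I expect this completeness-of-relations step to be the main obstacle, with the direct verification of the noncommuting relation \eqref{OqM4} being the most tedious among the routine computations.
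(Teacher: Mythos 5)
Your outline is sound, but it is worth saying up front that the paper offers no proof of this theorem at all: it simply derives the statement from \cite{Hod1993} Theorem 1.4.1. What you have done is reconstruct the two easy thirds of that result --- well-definedness of $\kappa$ (verification of relations \eqref{OqM1}--\eqref{OqM5} on $V(\omega_1)^{\otimes 2}$ and $V(\omega_1)^{\otimes(n+1)}$) and surjectivity (every irreducible of type $A_n$ is a summand of a tensor power of the vector representation, plus the product rule for matrix coefficients) --- and you correctly isolate injectivity, i.e.\ completeness of the relations, as the genuine content, which you then defer to the same citation the paper uses. That division of labor is exactly right and is the standard route in the literature; your version buys an explicit understanding of where each relation comes from, at the cost of redoing work the reference already packages. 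Three caveats. First, the semisimplicity of $\mathscr{C}_q$ is not Theorem~\ref{V is direct sum of weight spaces}, which only gives the weight-space decomposition; complete reducibility of finite-dimensional type-$\mathbf{1}$ modules for $q$ not a root of unity is a separate theorem (Jantzen 5.17) and you should cite it as such. Second, the claim that $w=\sum_{\sigma}(-q)^{\ell(\sigma)}e_{\sigma(1)}\otimes\cdots\otimes e_{\sigma(n+1)}$ spans a (trivial) submodule is sensitive to the coproduct and ordering conventions --- with some conventions the invariant vector carries $(-q)^{-\ell(\sigma)}$ --- so this must actually be checked against the presentation used here rather than asserted. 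Third, in the injectivity step, nondegeneracy of the evaluation pairing $C_q\times U_q\to k$ by itself does not give linear independence of the images of the ordered monomials; one needs the Peter--Weyl decomposition of $C_q$ and a dimension comparison (or an explicit separating family of elements of $U_q$), and that is precisely what \cite{Hod1993} Theorem 1.4.1 supplies, so your instinct to fall back on it there is the right one.
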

Using the isomorphism from  Theorem \ref{Cq and Oq isom} we will often abuse notation and denote $c^{\omega_1}_{f^i,e_j}$ by $X_{ij}$.

\section{The Restriction Map and  $C_q(B^\pm)$}
For any element $c\in C_q$ we may restrict the domain of $c$ to the subalgebra ${U}_q^{\geq0}$ (resp. $U_q^{\leq0}$).  This induces a well-defined $k$-algebra homomorphism from  $C_q$ to $(U_q^{\geq0})^*$ (resp. $(U_q^{\leq0})^*$). Let $\rho^+$ (resp. $\rho^-$) be the restriction homomorphism and  denote $\im \rho^+$ by $C_q(B^+)$ (resp. $\im \rho^-$ by $C_q(B^-)$).  
For $c\in C_q$ we denote $\rho^+(c)$ by $\overline{c}$. From this convention we note that   $\overline{c} \neq 0$ if and only if $c(u)\neq 0$ for some $u \in U_q^{\geq 0}$.

We now wish to better understand  $\rho^+(c^{\omega_1}_{f^i,e_j})$ which, using the conventions above, we denote by $\overline{X}_{ij}$.

\begin{lem}
\label{Xij map}
For $\lambda \in \Z\Pi$ and $I$ a finite sequence of elements from $\{1,2,\ldots, n+1\}$ we have the following:

For $i\leq j$,
$$\overline{X}_{ij}(K_\lambda E_I)=\threepartdef{q^{(\beta_i,\lambda)}}{I=\emptyset\; \mathrm{and} \; i=j}{q^{(\beta_i,\lambda)}} {I=(i,{i+1},\cdots, {j-2},{j-1})\; \mathrm{and} \; i<j}{0}{\mathrm{otherwise}}$$
where $\beta_i=-\omega_{i-1}+\omega_i$.

For $i>j$, then $\overline{X}_{ij}(K_\lambda E_I)=0$.
\end{lem}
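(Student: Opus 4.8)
The plan is to compute $\overline{X}_{ij}(K_\lambda E_I)$ directly from the definition of the matrix coefficient, reducing everything to the already-established action of $U_q^{\geq 0}$ on the module $V(\omega_1)$. Recall that $\overline{X}_{ij}=\rho^+\bigl(c^{\omega_1}_{f^i,e_j}\bigr)$ is simply the restriction to $U_q^{\geq 0}$ of the functional $u\mapsto f^i(u\cdot e_j)$. Since $K_\lambda E_I\in U_q^{\geq 0}$, the first step is to unwind
$$\overline{X}_{ij}(K_\lambda E_I)=f^i\bigl((K_\lambda E_I)\cdot e_j\bigr)=f^i\bigl(K_\lambda\cdot(E_I\cdot e_j)\bigr),$$
using that $V(\omega_1)$ is a left module, so the action is associative and $E_I$ is applied before $K_\lambda$.

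Next I would evaluate $E_I\cdot e_j$ using the description of $V(\omega_1)$ recorded around \eqref{Eaction}: for $i<j$ one has $E_I e_j=e_i$ exactly when $I=(i,i+1,\ldots,j-1)$, one has $E_I e_j=e_j$ exactly when $I=\emptyset$, and $E_I e_j=0$ for every other sequence $I$. The crucial structural feature is that the only nonzero outputs have the form $e_m$ with $m\leq j$. Applying $K_\lambda$ to such an $e_m$ scales it by $q^{(\beta_m,\lambda)}$ via \eqref{Kaction}, and pairing with $f^i$ through $f^i(e_m)=\delta_{im}$ extracts a factor $\delta_{im}$. Thus $\overline{X}_{ij}(K_\lambda E_I)=q^{(\beta_m,\lambda)}\delta_{im}$ whenever $E_I e_j=e_m$, and equals $0$ when $E_I e_j=0$.

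The statement then follows from a short case analysis keyed to the value of $m$. For $i\leq j$: the case $I=\emptyset$ yields $m=j$, which survives the factor $\delta_{im}$ only when $i=j$, giving $q^{(\beta_i,\lambda)}$; the case $I=(i,i+1,\ldots,j-1)$ yields $m=i<j$ and gives $q^{(\beta_i,\lambda)}$; every remaining sequence produces either $E_I e_j=0$ or an $e_m$ with $m\neq i$, hence $0$. For $i>j$: every nonzero output $E_I e_j=e_m$ satisfies $m\leq j<i$, so $f^i(e_m)=\delta_{im}=0$ in all cases, forcing $\overline{X}_{ij}(K_\lambda E_I)=0$.

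The argument is essentially bookkeeping, so I expect no serious obstacle; the one point that must be handled carefully is the monotonicity observation that $E_I$ can only raise weights, i.e.\ $E_I e_j$ is always either $0$ or $e_m$ with $m\leq j$. This single fact simultaneously kills the entire $i>j$ case and eliminates the spurious subcases when $i\leq j$, and it must be read off correctly from the raising-operator description of $V(\omega_1)$ rather than assumed.
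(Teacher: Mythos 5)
Your proposal is correct and follows essentially the same route as the paper: evaluate $f^i(K_\lambda E_I e_j)$ using \eqref{Eaction} to identify the only sequences $I$ with $E_I e_j\neq 0$, apply \eqref{Kaction} for the $K_\lambda$-scaling, and use that $E_I$ can only send $e_j$ to some $e_m$ with $m\leq j$ to dispose of the $i>j$ case. No gaps.
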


\begin{proof}
By  equation \eqref{Eaction}  if $i< j$ we have $E_Ie_j=e_i$ if and only if $I=(i,{i+1},\cdots,{j-2}, {j-1})$  and $E_I e_j=e_j$ if and only if $I=\emptyset$. Furthermore, if  $I$ is any other finite sequence of elements from $\{1,2,\ldots, n+1\}$ then  $E_Ie_j=0$ for all $j=1,2,\ldots,n+1$.
Hence by equation \eqref{Kaction} if $i<j$ and  $I=(i,\cdots, {j-1})$ we have $K_\lambda E_Ie_j=q^{(\beta_i,\lambda)}e_i$ and if $I=\emptyset$ then $K_\lambda E_Ie_j=q^{(\beta_i,\lambda)}e_j$.  Thus if  $i\leq j$ then $\overline{X_{ij}}(K_\lambda E_I)$ is as indicated.

Note that for all the $E_i$ we have  $E_{i}e_j$ is either  0 or a weight vector of lower weight.  Hence, for $i>j$ there is no $E_I$ so that  $E_Ie_j=e_i$.   Therefore for $i>j$ we have $f^i(E_Ie_j)=0$ for all $E_I$.  Hence, $\overline{X}_{ij}(K_\lambda E_I)=0$ for all $\lambda \in \Z\Phi^+$ and $I$ a finite sequence with entries in $\{1,2,\ldots, n+1\}$.
\end{proof}

 Let $W$ be the Weyl group of $\frsl_{n+1}$ and denote by $w_0$ the longest word for $W$.  Define the following  ideals of $C_q$ 
 \begin{align*}
J^+:&=\left\langle c^\mu_{-\alpha,\mu}\mid \alpha< \mu\;\mathrm{where}\; \mu \in \Lambda^+, \;\alpha\in \Omega(V(\mu))  \right \rangle\\
J^-:&=\left\langle c^\mu_{-\alpha,w_0\mu}\mid \alpha> w_0\mu\;\mathrm{where}\; \mu \in \Lambda^+,\; \alpha\in \Omega(V(\mu))  \right \rangle
\end{align*}
 The following two results are  from  9.2.11 and 9.2.12 in  \cite{Jos1995}.
\begin{thm}
\label{J+}
$J^+=\langle X_{ij} \;|\; i>j\rangle=\ker \rho^+$ and $J^-=\left\langle X_{ij}\mid i<j\right\rangle=\ker \rho^-$.
\end{thm}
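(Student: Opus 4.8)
The plan is to show that all three ideals coincide with $\ker\rho^+$ by closing the cyclic chain of inclusions $J^+\subseteq\langle X_{ij}\mid i>j\rangle\subseteq\ker\rho^+\subseteq J^+$; the statement for $\rho^-$ will then follow by symmetry, applying the transpose isomorphism $\overline{\tau}$ of Theorem \ref{OqIsomToOq'} (which interchanges the upper- and lower-triangular generators) together with the interchange of highest and lowest weight vectors implemented by $w_0$. The middle inclusion is the easy one: for $i>j$, Lemma \ref{Xij map} gives $\overline{X}_{ij}(K_\lambda E_I)=0$ for every $\lambda\in\Z\Pi$ and every finite sequence $I$, and since the elements $K_\lambda E_I$ span $U_q^{\geq0}$ we get $\overline{X}_{ij}=\rho^+(X_{ij})=0$; as $\ker\rho^+$ is an ideal it therefore contains $\langle X_{ij}\mid i>j\rangle$.

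For the first inclusion I would argue as follows. Because the $c^{\omega_1}_{f^i,e_j}$ generate $C_q$ (Theorem \ref{Cq and Oq isom}) and matrix coefficients multiply by $c^{V\otimes W}_{f\otimes g,v\otimes w}=c^V_{f,v}c^W_{g,w}$, realizing $V(\mu)$ inside a tensor power $V(\omega_1)^{\otimes N}$ expresses each generator $c^\mu_{-\alpha,\mu}$ of $J^+$ as a $k$-linear combination of monomials $X_{i_1j_1}\cdots X_{i_Nj_N}$. Tracking weights ($\wt f^{i_k}=-\beta_{i_k}$ and $\wt e_{j_k}=\beta_{j_k}$, which add under tensor products), the monomials that occur satisfy $\sum_k\beta_{i_k}=\alpha$ and $\sum_k\beta_{j_k}=\mu$. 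If some monomial had $i_k\le j_k$ for all $k$, then since $i\le j$ is equivalent to $\wt e_i\ge\wt e_j$ we would have $\beta_{i_k}\ge\beta_{j_k}$ for every $k$, hence $\alpha=\sum_k\beta_{i_k}\ge\sum_k\beta_{j_k}=\mu$, contradicting $\alpha<\mu$. Thus every monomial contains a factor $X_{i_kj_k}$ with $i_k>j_k$, so $c^\mu_{-\alpha,\mu}\in\langle X_{ij}\mid i>j\rangle$ and $J^+\subseteq\langle X_{ij}\mid i>j\rangle$.

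It remains to prove the hard inclusion $\ker\rho^+\subseteq J^+$, and this is where I expect the real work to lie. Since we already have $\langle X_{ij}\mid i>j\rangle\subseteq\ker\rho^+$, the map $\rho^+$ factors through $C_q/\langle X_{ij}\mid i>j\rangle\cong O_q(B^+)$ (via $\kappa$ of Theorem \ref{Cq and Oq isom}), inducing a surjection $\overline{\rho}^+:O_q(B^+)\to C_q(B^+)$; the inclusion to be shown is exactly the injectivity of $\overline{\rho}^+$. I would establish injectivity by a PBW/linear-independence argument. Fix the basis of $O_q(B^+)$ given by ordered monomials in the $y_{ij}=X_{ii}^{-1}X_{ij}$ ($i<j$) times Laurent monomials in the $X_{ii}$ (available from Lemma \ref{StructureOfOqN} and the smash-product decomposition $O_q(B^+)\cong O_q(T)\# O_q(N^+)$), and show that their images are linearly independent functionals on $U_q^{\geq0}$. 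Concretely, one evaluates these functionals on the elements $K_\lambda E_I$ using Lemma \ref{Xij map} together with the iterated coproduct $\Delta_{B^+}$, which computes the value of a monomial as a sum over ways of distributing the tensor factors; the key point is that $\overline{\rho}^+(y_{ij})$ is essentially dual to $E_{(i,i+1,\ldots,j-1)}$, so a triangular choice of $I$ and $\lambda$ recovers the exponents of a given monomial from a leading term and separates distinct monomials.

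Equivalently, this injectivity is the non-degeneracy, in the $O_q(B^+)$-variable, of the Hopf pairing between $U_q^{\geq0}$ and $O_q(B^+)$, which is precisely the content of Joseph's isomorphism $U_q^{\geq0}\cong O_q(B^+)$ invoked in the introduction; the main obstacle is the combinatorial bookkeeping needed to make the monomial separation explicit, rather than any conceptual difficulty. Once the three inclusions are in hand, antisymmetry forces $J^+=\langle X_{ij}\mid i>j\rangle=\ker\rho^+$, and the corresponding identities for $J^-$, $\langle X_{ij}\mid i<j\rangle$ and $\ker\rho^-$ follow from the symmetry noted at the outset.
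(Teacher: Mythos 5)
The paper gives no proof of this theorem at all: it is quoted verbatim from \cite{Jos1995} 9.2.11--9.2.12, so your attempt is being compared against a citation. Two of your three inclusions are sound. The containment $\langle X_{ij}\mid i>j\rangle\subseteq\ker\rho^+$ does follow immediately from Lemma~\ref{Xij map}, and your weight argument for $J^+\subseteq\langle X_{ij}\mid i>j\rangle$ (embed $V(\mu)$ in $V(\omega_1)^{\otimes N}$, note that a monomial with $i_k\leq j_k$ for all $k$ would force $\alpha\geq\mu$) is correct modulo the standard fact that every $V(\mu)$ of type $\mathbf{1}$ occurs in a tensor power of $V(\omega_1)$ for $\frsl_{n+1}$.

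The gap is in your third step, and it is twofold. First, a logical slip: you claim the remaining inclusion $\ker\rho^+\subseteq J^+$ ``is exactly the injectivity of $\overline{\rho}^+\colon O_q(B^+)\to C_q(B^+)$.'' It is not. Injectivity of $\overline{\rho}^+$ is equivalent to $\ker\rho^+=\langle X_{ij}\mid i>j\rangle$; combined with your first two inclusions this gives only $J^+\subseteq\langle X_{ij}\mid i>j\rangle=\ker\rho^+$ and leaves completely open the reverse containment $\langle X_{ij}\mid i>j\rangle\subseteq J^+$. That containment is not free: for $j=1$ one has $X_{i1}=c^{\omega_1}_{-\beta_i,\omega_1}\in J^+$ because $e_1$ is the highest weight vector of $V(\omega_1)$, but for $j\geq 2$ the vector $e_j$ is not a highest weight vector, and one must manufacture $X_{ij}$ from generators of $J^+$ (typically via quantum minors $c^{\omega_j}_{-\alpha,\omega_j}$), which is genuine work your proposal never addresses. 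So your cyclic chain of inclusions never closes and the equality $J^+=\langle X_{ij}\mid i>j\rangle$ remains unproved. Second, a circularity: the injectivity of $\overline{\rho}^+$ that you do need is exactly the nondegeneracy underlying Joseph's isomorphism $U_q^{\geq 0}\cong O_q(B^+)$, and within this paper that isomorphism is a \emph{consequence} of the present theorem (via Corollary~\ref{OqB+ cong CqB+} and Theorem~\ref{phi isomorphism}), so it cannot be invoked here. A self-contained argument would have to carry out the PBW-monomial separation you only sketch --- evaluating ordered monomials in the $y_{ij}$ against the elements $K_\lambda E_I$ and exhibiting a triangular pairing --- which is precisely the content of \cite{Jos1995} 9.2.11--9.2.12 that the paper declines to reproduce.
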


\begin{cor}
\label{OqB+ cong CqB+}
The restriction maps $\rho^\pm$ induce  $k$-algebra isomorphisms of $O_q(B^\pm)$ onto $C_q(B^\pm)$.
\end{cor}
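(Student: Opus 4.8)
The plan is to realize each $\rho^\pm$, after transporting it to $O_q(\SL_{n+1})$ along the isomorphism $\kappa$, as a surjection whose kernel is exactly the ideal one quotients by to form $O_q(B^\pm)$, and then to invoke the first isomorphism theorem. The two substantive inputs---the isomorphism $\kappa$ of Theorem~\ref{Cq and Oq isom} and the kernel computation of Theorem~\ref{J+}---are already in hand, so the corollary will reduce to bookkeeping.

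First I would transport everything to $O_q(\SL_{n+1})$. By Theorem~\ref{Cq and Oq isom}, $\kappa:O_q(\SL_{n+1})\to C_q$ is a $k$-algebra isomorphism with $\kappa(X_{ij})=c^{\omega_1}_{f^i,e_j}$, so I may identify the two algebras and regard $\rho^+$ as a $k$-algebra homomorphism defined on $O_q(\SL_{n+1})$. Since $\im \rho^+=C_q(B^+)$ by definition, the composite $\rho^+\kappa:O_q(\SL_{n+1})\to C_q(B^+)$ is a surjective $k$-algebra homomorphism; likewise $\rho^-\kappa:O_q(\SL_{n+1})\to C_q(B^-)$ is surjective.

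Next I would read off the kernels. Under the identification above, Theorem~\ref{J+} gives $\ker\rho^+=J^+=\langle X_{ij}\mid i>j\rangle=I^-$ and $\ker\rho^-=J^-=\langle X_{ij}\mid i<j\rangle=I^+$. Applying the first isomorphism theorem to the surjection $\rho^+\kappa$ then yields
\[
O_q(B^+)=O_q(\SL_{n+1})/I^-=O_q(\SL_{n+1})/\ker(\rho^+\kappa)\cong C_q(B^+),
\]
with the isomorphism being precisely the map induced by $\rho^+$ on the quotient. The same argument with $\rho^-$, $J^-$, and $I^+$ gives $O_q(B^-)\cong C_q(B^-)$.

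I do not expect a genuine obstacle here, as all the depth resides in Theorems~\ref{Cq and Oq isom} and~\ref{J+}. The one point deserving a moment's care is the notational identification: the generators $c^{\omega_1}_{f^i,e_j}$ of $C_q$ are written $X_{ij}$ under the abuse fixed after Theorem~\ref{Cq and Oq isom}, so the ideal $J^+=\langle X_{ij}\mid i>j\rangle$ of Theorem~\ref{J+} is literally $\kappa(I^-)$. Confirming this matchup is what guarantees that $O_q(\SL_{n+1})/\ker(\rho^+\kappa)$ coincides with $O_q(B^+)$ rather than with some other quotient, and it is the only step where I would pause to make the correspondence explicit before concluding.
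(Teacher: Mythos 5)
Your proof is correct and matches the paper's intent: the corollary is stated without proof precisely because it follows immediately from Theorem~\ref{Cq and Oq isom} and Theorem~\ref{J+} via the first isomorphism theorem, exactly as you argue. Your care in matching $J^+=\kappa(I^-)$ (so that $\ker(\rho^+\kappa)$ is the ideal defining $O_q(B^+)$, not $O_q(B^-)$) is the right point to check and you have it right.
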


\section{The Dual Pairing}
By \cite{Fra1998}  Corollary 3.3, if $\sqrt[n+1]{q} \in k$ there exists a unique nondegenerate bilinear pairing  $(-,-):\check{U}_q^{\leq 0} \times \check{U}_q^{\geq 0} \to k$ defined by the following properties: 
for all $u,u'\in \check{U}^{\geq0}$ and all $v, v' \in \check{U}^{\leq 0}$ and all $\mu, \nu \in \Lambda$ and $\alpha, \beta \in \Pi$
\begin{align*}
(u,vv')&=(\Delta(u),v'\otimes v)& (uu',v)&=(u\otimes u',\Delta(v))\\
(K_\mu, K_\nu)&=q^{-(\mu, \nu)}& (F_\alpha,E_\beta)&=-\delta^{}_{\alpha \beta}\qhat^{\, -1}\\
(K_\mu,E_\beta)&=0& (F_\alpha,K_\mu)&=0
\end{align*}
We note there is a pairing on $U_q^{\leq 0}\times U_q^{\geq 0}$ defined in \cite{Jan1995} 6.12 as well as \cite{Jos1995}  9.2.10 with the same properties as above. 

The bilinear pairing above can be restricted to a bilinear pairing on  $\check{U}_q^{\leq 0} \times {U}_q^{\geq 0}$. In fact the assumption that  $\sqrt[n+1]{q} \in k$ is not needed to have a well defined dual pairing on $\check{U}_q^{\leq 0} \times {U}_q^{\geq 0}$.   Since we will  primarily be studying this  pairing,  we remove this assumption. We now show that the pairing in nondegenerate by  slightly modifying  the proof in \cite{Fra1998} Corollary 3.3.

\begin{thm}
\label{nondeg dual pairing}
The dual pairing on $\check{U}_q^{\leq 0} \times {U}_q^{\geq 0}$ is nondegenerate.
\end{thm}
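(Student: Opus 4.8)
The plan is to use the triangular decompositions to factor the pairing into a ``nilpotent'' part and a ``torus'' part, prove that each factor is nondegenerate, and observe that it is precisely the torus factor where the hypothesis $\sqrt[n+1]{q}\in k$ becomes unnecessary. As vector spaces we have $U_q^{\geq 0}\cong U_q^+\otimes U_q^0$ and $\check{U}_q^{\leq 0}\cong U_q^-\otimes\check{U}_q^0$, where $U_q^+$ (resp.\ $U_q^-$) is weight graded with $E_i$ of weight $\alpha_i$ (resp.\ $F_i$ of weight $-\alpha_i$) and the $K$'s of weight $0$. Fixing homogeneous PBW bases $\{F_b\}$ of $U_q^-$ and $\{E_c\}$ of $U_q^+$, the sets $\{F_bK_\mu\mid \mu\in\Lambda\}$ and $\{E_cK_\nu\mid \nu\in\Z\Pi\}$ are bases of $\check{U}_q^{\leq 0}$ and $U_q^{\geq 0}$, and it suffices to show that the Gram matrix of $(-,-)$ in these bases has trivial left and right radicals.

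The first step is the structural identity $(F_bK_\mu, E_cK_\nu)=(F_b,E_c)\,q^{-(\mu,\nu)}$, valid for all $\mu\in\Lambda$ and $\nu\in\Z\Pi$, together with the weight orthogonality $(F_b,E_c)=0$ unless $\mathrm{wt}(F_b)=-\mathrm{wt}(E_c)$. Both follow, exactly as in \cite{Fra1998} Corollary 3.3, by induction on the height of the weight using the factorization rules $(uu',v)=(u\otimes u',\Delta(v))$ and $(u,vv')=(\Delta(u),v'\otimes v)$, the grouplikeness $\Delta(K_\lambda)=K_\lambda\otimes K_\lambda$, and the given values $(K_\mu,E_\beta)=0=(F_\alpha,K_\mu)$, $(F_\alpha,E_\beta)=-\delta_{\alpha\beta}\qhat^{\,-1}$; none of these ingredients is altered by enlarging the left-hand torus to $\Lambda$ while keeping the right-hand torus at $\Z\Pi$. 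Consequently the Gram matrix is the tensor (Kronecker) product of the nilpotent Gram matrix $\big[(F_b,E_c)\big]_{b,c}$ on $U_q^-\times U_q^+$ and the torus Gram matrix $\big[q^{-(\mu,\nu)}\big]_{\mu,\nu}$ on $\check{U}_q^0\times U_q^0$, so the whole pairing is nondegenerate iff both factors are.

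For the nilpotent factor I would simply note that $U_q^+=\check{U}_q^+$ and $U_q^-=\check{U}_q^-$, so the restriction of $(-,-)$ to $U_q^-\times U_q^+$ is identical to the one in the $\check{U}_q$ setting; its nondegeneracy is the part of \cite{Fra1998} Corollary 3.3 that never touches the torus, and it carries over verbatim. The torus factor is the genuinely new step. Because the right-hand torus is only $U_q^0$, the exponents $(\mu,\nu)$ with $\mu\in\Lambda$ and $\nu\in\Z\Pi$ all lie in $\Z$, so $q^{-(\mu,\nu)}$ is defined with no root of $q$ needed. To see it is nondegenerate, view $\chi_\nu(\mu)=q^{-(\mu,\nu)}$ as a character of $\Lambda$: for $\nu\neq\nu'$ in $\Z\Pi$, writing $\nu-\nu'=\sum_j k_j\alpha_j$ and using $(\omega_i,\alpha_j)=\delta_{ij}$ gives $(\omega_i,\nu-\nu')=k_i\neq 0$ for some $i$, so $q^{-(\omega_i,\nu-\nu')}\neq 1$ since $q$ is not a root of unity; hence the $\chi_\nu$ are pairwise distinct and therefore linearly independent over $k$, killing the right radical. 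Symmetrically, for $\mu\neq\mu'$ in $\Lambda$ some simple root $\alpha_j$ gives $(\mu-\mu',\alpha_j)\neq 0$, killing the left radical.

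Assembling the two factors, the Gram matrix is a Kronecker product of two pairings with trivial radicals, hence has trivial radicals, and $(-,-)$ on $\check{U}_q^{\leq 0}\times U_q^{\geq 0}$ is nondegenerate. I expect the main obstacle to be the first step: proving the factorization identity and weight orthogonality cleanly. Since these rest only on the Hopf structure of the generators, they can be imported essentially unchanged from \cite{Fra1998}; the delicate point genuinely particular to this statement is the torus computation, where one must check that restricting the right-hand torus to $\Z\Pi$ while leaving the left-hand torus at $\Lambda$ still yields a perfect pairing of lattices, which is exactly what permits dropping the $(n+1)$-st root hypothesis.
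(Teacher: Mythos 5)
Your proof is correct, and at its core it follows the same strategy as the paper's: both arguments rest on the triangular decomposition, the factorization $(FK_\mu,EK_\nu)=(F,E)\,q^{-(\mu,\nu)}$, the nondegeneracy of the $U_q^-\times U_q^+$ pairing on each $\Z\Pi$-graded piece, and the hypothesis that $q$ is not a root of unity to dispose of the torus direction. The differences are in execution. For the left radical the paper does not phrase the torus step as linear independence of characters: it expands a putative radical element as $\sum_{\mu',i}v_i^{-\mu'}p_{\mu',i}(K)$ against a dual basis of each $U^+_\mu$, specializes $\eta=-(m_1\alpha_1+\cdots+m_n\alpha_n)$, and reduces to a Laurent polynomial vanishing at every point $(q^{m_1},\ldots,q^{m_n})$, which forces it to vanish by \cite{Fra1998} Lemma 3.2 --- an argument equivalent to your Dedekind-lemma step, just in coordinates. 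The more substantive divergence is the right radical: the paper handles $x\in U_q^{\geq 0}$ by viewing it inside $\check{U}_q^{\geq 0}$ and citing nondegeneracy of the full $\check{U}_q^{\leq 0}\times\check{U}_q^{\geq 0}$ pairing from \cite{Fra1998}, which formally presupposes $\sqrt[n+1]{q}\in k$ --- the very hypothesis the theorem is meant to drop (one would need to pass to a field extension to justify this). Your symmetric Kronecker-product treatment avoids that entirely, since all exponents $(\mu,\nu)$ with $\mu\in\Lambda$, $\nu\in\Z\Pi$ are integers; the only thing you should make explicit is the (short) verification that a tensor product of two bilinear pairings with trivial radicals again has trivial radicals, which follows from the same linear-independence argument you already use for the torus factor.
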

\begin{proof}
 By \cite{Jan1995} 4.7, $U^+_q$ and $U^-_q$ are $\Z\Pi$-graded with $\deg E_\alpha=\alpha$ and $\det F_{-\alpha}=-\alpha$ for $\alpha \in \Pi$.   For each $\mu \in \Z\Pi$ denote a basis of $U_\mu^+$ by $\{u_i^\mu\}$. By \cite{Jan1995} Corollary 8.30, the pairing when restricted to $U^-_{-\mu}\times U^+_\mu$ with $\mu \in \Z\Pi$ is nondegenerate.   Therefore, we may select  a corresponding dual basis of $U_{-\mu}^-$,  which we denote by $\{v_i^{-\mu}\}$, with the property that $(v_i^{-\mu},u_j^{\mu})=\delta_{ij}$.

Suppose that $y \in \check{U}_q^{\leq 0}$ so that $(y,x)=0$ for all $x \in U_q^{\geq 0}$. We may write  
$$y=\sum_{\mu'\in \Z\Pi, i} v^{-\mu'}_i p_{\mu',i}(K)$$ where 
 $$p_{\mu',i}(K)=\sum_{\lambda' \in \Lambda}c^{\mu',i}_{\lambda'}K_{\lambda'}$$
for some scalars $c^{\mu',i}_{\lambda'}$.  It follows from \cite{Jan1995} 6.10(3) and 6.10(4) that if $\mu \in \Z\Pi$ then for  all $\eta \in \Z\Pi$ and all $j$ we have 
\[
0=(y,u^{\mu}_jK_\eta)=\sum_{\lambda'\in \Lambda}c^{\mu,j}_{\lambda'}q^{-(\eta,\lambda')}
\]
Notice that for all $\lambda' \in \Lambda$ there exist scalars $\lambda'_1,\ldots, \lambda'_n \in \Z$ so that $\lambda'=\lambda'_1 \omega_1+\cdots+\lambda'_n\omega_n$. Therefore, for  each $m=(m_1,\ldots,m_n)\in \Z^n$, setting $\eta= -(m_1\alpha_1+\cdots+m_n\alpha_n)$ we get
 \[
0=\sum_{\lambda'\in \Lambda}c^{\mu,j}_{\lambda'}q^{-(\eta,\lambda')}=\sum_{\lambda'\in \Lambda}c^{\mu,j}_{\lambda'} q^{m_1 \lambda'_1}\cdots q^{m_n\lambda'_n}=p_{\mu,j}(q^{m_1},\ldots,q^{m_n})=p_{\mu,j}(q^{m})
\]
where
$$p_{\mu,j}(x_1,\ldots,x_n)=\sum_{\lambda' \in \Lambda}c^{\mu,j}_{\lambda'}x_1^{\lambda'_1}\cdots x_n^{\lambda'_n}$$
Since $q$ is not a root of unity, it follows from \cite{Fra1998}  Lemma 3.2 that $p_{\mu,j}=0$ for all $\mu \in \Z\Pi$ and all $j$.  Hence, $y=0$.  

Suppose for $x\in U_q^{\geq 0}$ that  $(y,x)=0$ for all $y \in \check{U}_q^{\leq 0}$.  Since $x\in \check{U}_q^{\geq0}$ it follows from the nondegeneracy of $(-,-)$ on $\check{U}_q^{\leq 0} \times \check{U}_q^{\geq 0}$ that $x=0$.
\end{proof}

Using this  dual pairing we define the map $\phi : \check{U}_q^{\leq 0}\to ({U}_q^{\geq0})^*$ by  
$$\phi(u)(v)=(u,v)$$
 The map $\phi$ is a $k$-algebra homomorphism and since the bilinear form is nondegenerate by Theorem \ref{nondeg dual pairing},  $\phi$ is injective.   

 For $I=(i_1,i_2,\ldots, i_N)$ define  $\mathrm{wt}\;I=\alpha_{i_1}+\alpha_{i_2}+\cdots+\alpha_{i_N}$.  We say that $J\subseteq I$ if $J$ is a subsequence of $I$. For $J\subseteq I$ we denote $I-J$ to be the sequence remaining when the subsequence $J$ is removed.  
 
 It is  shown in \cite{Jan1995} 6.8(8) and 6.12    that  $(F_I,E_J)=0$ unless $\mathrm{wt\;}I=\mathrm{wt\;}J$.  If follows that $\ker (\phi(F_I))$ has finite codimension for every $I$.  Similarly, $\ker \phi(K_\mu)$  has  codimension 1.  Hence $\phi(u) \in (U_q^{\geq0})^\circ$ for all $u \in \check{U}_q^{\leq0}$.

\begin{lem}
\label{Xii and Xii+1 in phi}
 For the map $\phi$ we have  $\phi(K_{-\beta_i+\alpha_i}F_i)=-q^{-1}\qhat^{\,-1}\overline{X}_{ii+1}$ and $\phi(K_{\mp \beta_i})=\overline{X}_{ii}^{\pm1}$.
\end{lem}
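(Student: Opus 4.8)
The plan is to compare the two functionals appearing on each side of each identity as elements of $(U_q^{\geq0})^*$, by evaluating them on a spanning set of $U_q^{\geq 0}$. By the triangular decomposition $U_q^{\geq 0}=U_q^0U_q^+$, and since $K_\lambda E_i=q^{(\lambda,\alpha_i)}E_iK_\lambda$ lets one push every $K$ to the left, the elements $K_\lambda E_I$ with $\lambda\in\Z\Pi$ and $I$ a finite sequence span $U_q^{\geq 0}$. So it suffices to match values on each $K_\lambda E_I$. By Lemma~\ref{Xij map}, $\overline{X}_{ii}(K_\lambda E_I)=q^{(\beta_i,\lambda)}$ when $I=\emptyset$ and $0$ otherwise, while $\overline{X}_{ii+1}(K_\lambda E_I)=q^{(\beta_i,\lambda)}$ when $I=(i)$ and $0$ otherwise; these are the targets to reproduce.

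For the torus part I would first record, from the defining properties of the pairing together with $(u,vv')=(\Delta u,v'\otimes v)$, that $(K_\mu,1)=1$ (since $(K_\mu,K_0)=q^0$) and $(K_\mu,E_I)=0$ for $|I|\geq 1$ (expanding $E_I=E_{i_1}E_{(i_2,\dots)}$ and using $(K_\mu,E_\alpha)=0$). Then, using $\Delta K_{-\beta_i}=K_{-\beta_i}\otimes K_{-\beta_i}$,
\[
\phi(K_{-\beta_i})(K_\lambda E_I)=(K_{-\beta_i},K_\lambda E_I)=(K_{-\beta_i},E_I)(K_{-\beta_i},K_\lambda)=q^{(\beta_i,\lambda)}\delta_{I,\emptyset},
\]
which is exactly $\overline{X}_{ii}$. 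For $\phi(K_{\beta_i})=\overline{X}_{ii}^{-1}$ I would invoke that $\phi$ is an algebra homomorphism and that $K_{\beta_i}=K_{-\beta_i}^{-1}$: since $X_{ii}$ is invertible in $O_q(B^+)$, Corollary~\ref{OqB+ cong CqB+} makes $\overline{X}_{ii}$ invertible in $C_q(B^+)$, so $\phi(K_{\beta_i})=\phi(K_{-\beta_i})^{-1}=\overline{X}_{ii}^{-1}$, covering both signs in $\phi(K_{\mp\beta_i})=\overline{X}_{ii}^{\pm1}$.

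The substantive computation is $\phi(K_{-\beta_i+\alpha_i}F_i)$. I would evaluate $(K_{-\beta_i+\alpha_i}F_i,K_\lambda E_I)=(\Delta(K_{-\beta_i+\alpha_i}F_i),E_I\otimes K_\lambda)$ using, from $\Delta F_i=F_i\otimes K_i^{-1}+1\otimes F_i$ and $K_{-\beta_i+\alpha_i}K_i^{-1}=K_{-\beta_i}$,
\[
\Delta(K_{-\beta_i+\alpha_i}F_i)=K_{-\beta_i+\alpha_i}F_i\otimes K_{-\beta_i}+K_{-\beta_i+\alpha_i}\otimes K_{-\beta_i+\alpha_i}F_i.
\]
The second summand yields $(K_{-\beta_i+\alpha_i},E_I)(K_{-\beta_i+\alpha_i}F_i,K_\lambda)$, which vanishes since $(K_{-\beta_i+\alpha_i}F_i,K_\lambda)=(K_{-\beta_i+\alpha_i},K_\lambda)(F_i,K_\lambda)=0$. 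The first summand gives $(K_{-\beta_i+\alpha_i}F_i,E_I)\,q^{(\beta_i,\lambda)}$. The crux is that $(K_{-\beta_i+\alpha_i}F_i,E_I)=(K_{-\beta_i+\alpha_i}\otimes F_i,\Delta E_I)$ vanishes unless $I=(i)$: since $K_{-\beta_i+\alpha_i}$ pairs nontrivially only with weight-$0$ elements and $F_i$ only with weight-$\alpha_i$ elements, a nonzero contribution needs a bidegree-$(0,\alpha_i)$ term of $\Delta E_I$, and as $\Delta$ preserves total weight this forces $\wt I=\alpha_i$, hence $I=(i)$ by linear independence of the simple roots. For $I=(i)$, using $\Delta E_i=K_i\otimes E_i+E_i\otimes 1$, together with $(K_{-\beta_i+\alpha_i},K_i)=q^{-(-\beta_i+\alpha_i,\alpha_i)}=q^{(\beta_i,\alpha_i)-(\alpha_i,\alpha_i)}=q^{1-2}=\qinv$ and $(F_i,E_i)=-\qhat^{-1}$, I get $(K_{-\beta_i+\alpha_i}F_i,E_{(i)})=-\qinv\qhat^{-1}$. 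Thus $\phi(K_{-\beta_i+\alpha_i}F_i)(K_\lambda E_I)=-\qinv\qhat^{-1}q^{(\beta_i,\lambda)}\delta_{I,(i)}=-\qinv\qhat^{-1}\overline{X}_{ii+1}(K_\lambda E_I)$ for every $I$, giving the claim. The main obstacle is exactly this last step: correctly expanding the coproduct of the product $K_{-\beta_i+\alpha_i}F_i$, tracking the twisted evaluation on $E_I\otimes K_\lambda$, and using the weight grading to rule out every $I\neq(i)$.
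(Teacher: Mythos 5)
Your proof is correct, and it follows the paper's overall strategy: evaluate both sides on the spanning set $\{K_\lambda E_I\}$ of $U_q^{\geq 0}$ and match against the values of $\overline{X}_{ii}$ and $\overline{X}_{i,i+1}$ from Lemma~\ref{Xij map}. The genuine difference is in how the pairing $(K_{-\beta_i+\alpha_i}F_i,\,K_\lambda E_I)$ is unraveled. The paper splits the product in the \emph{first} slot, which forces it to comultiply $K_\lambda E_I$ via the general expansion $\Delta(K_\lambda E_I)=\sum_{J\subseteq I}c_{I,J}\,K_\lambda K_J E_{I-J}\otimes K_\lambda E_J$ (cited from Jantzen 6.8(1)) and then to kill the unwanted terms using 6.8(7) and 6.9(2). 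You instead split the product $K_\lambda E_I$ in the \emph{second} slot, so the only coproduct you ever need explicitly is the two-term $\Delta(K_{-\beta_i+\alpha_i}F_i)$, and you dispose of every $I\neq(i)$ in one stroke via the $\Z\Pi$-grading: $K_\mu$ pairs nontrivially only against degree $0$, $F_i$ only against degree $\alpha_i$, and $\Delta$ preserves total degree, so a nonzero contribution forces $\wt I=\alpha_i$ and hence $I=(i)$. This buys independence from the general coproduct formula for $E_I$ at the cost of the graded orthogonality of the pairing, which is the same Jantzen input ($(F_I,E_J)=0$ unless $\wt I=\wt J$) that the paper records just before the lemma, so nothing is circular. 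Your handling of $\phi(K_{\beta_i})$ by uniqueness of inverses (using that $\phi$ is a unital algebra map and $\overline{X}_{ii}$ is invertible in $C_q(B^+)$) also diverges from the paper's direct evaluation, and is arguably cleaner, since Lemma~\ref{Xij map} as stated gives the values of $\overline{X}_{ii}$ but not of $\overline{X}_{ii}^{-1}$. All the constants check out: $(K_{-\beta_i+\alpha_i},K_{\alpha_i})=q^{(\beta_i,\alpha_i)-(\alpha_i,\alpha_i)}=q^{-1}$ and $(F_i,E_i)=-\qhat^{\,-1}$ reproduce the paper's $-q^{-1}\qhat^{\,-1}q^{(\beta_i,\lambda)}$.
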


\begin{proof}
From \cite{Jan1995} 6.9 (2) it follows   for all $\mu, \lambda \in \Lambda$ that $(K_\mu, K_\lambda E_I)=0$ if and only if $I\neq \emptyset$. Using this fact we get
\begin{align*}
 \phi(K_{-\beta_i+\alpha_i}F_i)(K_\lambda E_i)
 &=(K_{-\beta_i+\alpha_i}F_i,K_\lambda E_i)
=\left(K_{-\beta_i+\alpha_i}\otimes F_i, \Delta(K_\lambda E_i)\right)\\
&=(K_{-\beta_i+\alpha_i}\otimes F_i,K_{\lambda +\alpha_i}\otimes K_\lambda E_i +K_\lambda E_i\otimes K_\lambda)\\
&=(K_{-\beta_i+\alpha_i},K_{\lambda+\alpha_i})(F_i,K_\lambda E_i)+(K_{-\beta_i+\alpha_i},K_\lambda E_i)(F_i,K_\lambda)\\
&=(K_{-\beta_i+\alpha_i},K_{\lambda+\alpha_i})(F_i,K_\lambda E_i)
=(K_{-\beta_i+\alpha_i},K_{\lambda+\alpha_i})(\Delta(F_i),E_i\otimes K_\lambda)\\
&=(K_{-\beta_i+\alpha_i},K_{\lambda+\alpha_i})(F_i\otimes K_{-\alpha_i}+1\otimes F_i,E_i \otimes K_\lambda)\\
&=(K_{-\beta_i+\alpha_i},K_{\lambda+\alpha_i})\left((F_i,E_i)(K_{-\alpha_i},K_\lambda)+ (1,E_i)(F_i,K_\lambda)\right)\\
&=(K_{-\beta_i+\alpha_i},K_{\lambda+\alpha_i})(F_i,E_i)(K_{-\alpha_i},K_\lambda)\\
&=-q^{-(-\beta_i+\alpha_i,\lambda+\alpha_i)}\qhat^{\,-1}q^{(\alpha_i,\lambda)}=-q^{-1}\qhat^{-1}q^{(\beta_i,\lambda)}
\end{align*}

Now suppose $I \neq (i)$. It follows from  \cite{Jan1995} 6.8 (1), for $\lambda \in \Z\Pi$ and $I$ a finite sequence of elements in $\{1,2,\ldots, n\}$   
\[
\Delta(K_\lambda E_I)=\sum_{J\subseteq I}c^{}_{I,J} K_\lambda K_JE_{I-J}\otimes K_\lambda E_J
\]
for some scalars, $c_{I,J}$.  Hence, we have
\begin{align*}
 \phi(K_{-\beta_i+\alpha_i}F_i)(K_\lambda E_I)
 &=(K_{-\beta_i+\alpha_i}F_i,K_\lambda E_I)=\left(K_{-\beta_i+\alpha_i}\otimes F_i, \Delta(K_\lambda E_I)\right)\\
&=\left(K_{-\beta_i+\alpha_i}\otimes F_i,\sum_{J\subseteq I}c^{}_{I,J} K_\lambda K_JE_{I-J}\otimes K_\lambda E_J\right)\\
&=\sum_{J\subseteq I} c^{}_{I,J} (K_{-\beta_i+\alpha_i},K_\lambda K_JE_{I-J})(F_i,K_\lambda E_J)
\end{align*}
It follows from \cite{Jan1995} 6.8 (7) that $(F_i,K_\lambda E_J)= 0$ if and only if $J\neq (i)$. Moreover, if $J=(i)$  then since $I \neq (i)$ we have that $I-J \neq \emptyset$. This  implies $(K_{-\beta_i+\alpha_i},K_\lambda K_JE_{I-J})=0$. Therefore, we have $ \phi(K_{-\beta_i+\alpha_i}F_i)(K_\lambda E_I)=0$. Therefore, we have 
\begin{align*}
 \phi(K_{-\beta_i+\alpha_i}F_i)(K_\lambda E_I)=\twopartdef{-q^{-1}\qhat^{\,-1}q^{(\beta_i,\lambda)}}{I=(i)}
 {0}{I\neq (i)}
\end{align*}

 Similarly,
 \begin{align*}
 \phi(K_{\mp\beta_i})(K_\lambda)
 =(K_{\mp\beta_i},K_\lambda) 
 =q^{\pm(\beta_i,\lambda)}
\end{align*}
Moreover, if $\lambda \in \Z\Pi$ and $I$ is a finite sequence of elements from $\{1,2,\ldots,n\}$ and $I \neq \emptyset$ we have
  \begin{align*}
 \phi(K_{\mp\beta_i})(K_\lambda E_I)
 &=(K_{\mp\beta_i},K_\lambda E_I)\\
 &=(K_{\mp\beta_i}\otimes K_{\mp\beta_i}, E_I\otimes K_\lambda)\\
 &=(K_{\mp\beta_i},E_I)(K_{\mp\beta_i},K_\lambda)\\
 &=0
\end{align*}
Hence,
  $$\phi(K_{\mp\beta_i})(K_\lambda E_I)=\twopartdef{q^{\pm(\beta_i,\lambda)}}{I=\emptyset}{0}{\mathrm{otherwise}}$$
  
 By Lemma~\ref{Xij map},  
 $$\overline{X}_{i,i+1}(K_\lambda E_I)=\twopartdef{q^{(\beta_i,\lambda)}}{I=(i)}{0}{\mathrm{otherwise}}$$
 and 
$$\overline{X}_{ii}^{\pm1}(K_\lambda E_I)=\twopartdef{q^{\pm(\beta_i,\lambda)}}{I=\emptyset}{0}{\mathrm{otherwise}}$$ 

Since $U_q^{\geq 0}$  is spanned by  the elements of the form $K_\lambda E_I$ where $I$ is a finite sequence of elements from $\{1,2,\ldots, n\}$ and  $\lambda \in \Z\Pi$ then  $\phi(K_{-\beta_i+\alpha_i}F_i)=-q^{-1}\qhat^{\,-1}\overline{X}_{i,i+1}$ and $\phi(K_{\mp\beta_i})=\overline{X}_{ii}^{\pm 1}$.  \end{proof}

The following theorem can be found in \cite{Jos1995} 9.2.12; however, in the present case we may prove it more simply.

\begin{thm}
The map $\phi$ is an isomorphism of $\check{U}_q^{\leq 0}$ onto $C_q(B^+)$.
\label{phi isomorphism}
\end{thm}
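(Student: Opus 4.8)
The plan is to leverage the fact that $\phi$ is already a $k$-algebra homomorphism which is injective by the nondegeneracy of the pairing (Theorem~\ref{nondeg dual pairing}). Consequently it suffices to identify the image: I will show $\im\phi=C_q(B^+)$, and then $\phi$ restricts to an isomorphism of $\check{U}_q^{\leq 0}$ onto $C_q(B^+)$. Both algebras are finitely generated, so I work entirely with generators, using Lemma~\ref{Xii and Xii+1 in phi} as the bridge between the two generating sets.

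For the inclusion $\im\phi\subseteq C_q(B^+)$, recall that $\check{U}_q^{\leq 0}$ is generated by the $K_\lambda$ ($\lambda\in\Lambda$) together with the $F_i$. First I would observe that the weights $\beta_i=-\omega_{i-1}+\omega_i$ generate $\Lambda$, since their partial sums give $\omega_1,\dots,\omega_n$; hence every $K_\lambda$ is a product of the $K_{\pm\beta_i}$, and as $\phi(K_{\mp\beta_i})=\overline{X}_{ii}^{\pm1}\in C_q(B^+)$ and $\phi$ is multiplicative, every $\phi(K_\lambda)$ lies in $C_q(B^+)$. For the $F_i$, using $\alpha_i=\beta_i-\beta_{i+1}$ one has $K_{-\beta_i+\alpha_i}=K_{-\beta_{i+1}}$, so Lemma~\ref{Xii and Xii+1 in phi} gives $\phi(F_i)=\phi(K_{\beta_{i+1}})\,\phi(K_{-\beta_{i+1}}F_i)=-q^{-1}\qhat^{-1}\,\overline{X}_{i+1,i+1}^{-1}\overline{X}_{i,i+1}\in C_q(B^+)$. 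Since $\phi$ is an algebra map carrying a generating set of $\check{U}_q^{\leq 0}$ into $C_q(B^+)$, the inclusion follows.

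For the reverse inclusion $C_q(B^+)\subseteq\im\phi$, recall from Lemma~\ref{Xij map} that $\overline{X}_{ij}=0$ for $i>j$, so $C_q(B^+)$ is generated by the $\overline{X}_{ij}$ with $i\le j$. Lemma~\ref{Xii and Xii+1 in phi} already places the diagonal elements $\overline{X}_{ii}^{\pm1}=\phi(K_{\mp\beta_i})$ and the superdiagonal elements $\overline{X}_{i,i+1}=-q\qhat\,\phi(K_{-\beta_i+\alpha_i}F_i)$ in $\im\phi$. The crux is to reach the remaining generators $\overline{X}_{ij}$ with $j-i\ge 2$. Here I would induct on $j-i$ and invoke the commutation relation \eqref{OqM4}, transported to $C_q(B^+)\cong O_q(B^+)$ via Corollary~\ref{OqB+ cong CqB+}: taking indices $(i,j-1)$ and $(j-1,j)$ yields $\overline{X}_{i,j-1}\overline{X}_{j-1,j}-\overline{X}_{j-1,j}\overline{X}_{i,j-1}=\qhat\,\overline{X}_{ij}\overline{X}_{j-1,j-1}$, and since $\overline{X}_{j-1,j-1}$ is invertible this solves for $\overline{X}_{ij}=\qhat^{-1}\big(\overline{X}_{i,j-1}\overline{X}_{j-1,j}-\overline{X}_{j-1,j}\overline{X}_{i,j-1}\big)\overline{X}_{j-1,j-1}^{-1}$ in terms of $\overline{X}_{i,j-1}$ (in $\im\phi$ by the inductive hypothesis), $\overline{X}_{j-1,j}$, and $\overline{X}_{j-1,j-1}^{-1}$. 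As $\im\phi$ is a subalgebra, $\overline{X}_{ij}\in\im\phi$.

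Combining the two inclusions gives $\im\phi=C_q(B^+)$, and together with injectivity this yields the isomorphism. I expect the reverse inclusion to be the main obstacle: Lemma~\ref{Xii and Xii+1 in phi} only supplies the diagonal and superdiagonal matrix coefficients directly, and one must argue that the ``distant'' $\overline{X}_{ij}$ are still generated by them. The essential point, and precisely what makes the Borel case simpler than the general statement in \cite{Jos1995}, is that the diagonal $\overline{X}_{ii}$ are invertible, so the quantum commutator \eqref{OqM4} can be solved to express each $\overline{X}_{ij}$ inductively through its shorter neighbors.
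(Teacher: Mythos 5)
Your proof is correct and follows essentially the same route as the paper: use nondegeneracy for injectivity, push the generators $K_\lambda$ and $F_i$ into $C_q(B^+)$ via Lemma~\ref{Xii and Xii+1 in phi} and the identities $\beta_1+\cdots+\beta_i=\omega_i$, $\alpha_i=\beta_i-\beta_{i+1}$, and then match generating sets. The only difference is that the paper simply asserts that the $\overline{X}_{ii}^{\pm1}$ and $\overline{X}_{i,i+1}$ generate $C_q(B^+)$, whereas you supply the (correct) inductive justification via relation \eqref{OqM4} and the invertibility of the diagonal entries.
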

\begin{proof}
Note that  $\mp\beta_1=\mp\omega_1$ so then 
$$\phi(K_{\mp\omega_1})=\phi(K_{\mp\beta_1})=\overline{X}_{11}^{\pm1}\in C_q(B^+)$$
 Proceeding inductively, we see that if $\phi(K_{\mp\omega_{i-1}} )\in C_q(B^+)$ then since $\phi(K_{\mp\beta_i})=\overline{X}_{ii}^{\pm1} $ we have 
$$\phi(K_{\mp\omega_{i}})=\phi(K_{\mp\beta_i})\phi(K_{\mp\omega_{i-1}})=\overline{X}_{ii}^{\pm1}\phi(K_{\mp\omega_{i-1}})\in C_q(B^+)$$
It then follows that $\phi(K_\mu) \in C_q(B^+)$ for all $\mu \in \Lambda$.

Finally, we note that $\beta_i-\alpha_i=\beta_{i+1}$ for $1\leq i\leq n$.  Therefore we have for $1\leq i \leq n$ that
\begin{align*}
\phi(F_i)=\phi(K_{\beta_i-\alpha_i}K_{-\beta_i+\alpha_i}F_i)=\phi(K_{\beta_{i+1}})\phi(K_{-\beta_i+\alpha_i}F_i)\in C_q(B^+)
\end{align*}
  
 Since the  $K_{\mu}$ with $\mu \in \Lambda$ and the $F_i$ generate $\check{U}_q^{\leq 0}$, then  $\mathrm{im}\; \phi\subseteq C_q(B^+)$.   Conversely, since the $\overline{X}_{ii}^{\pm1}$ and $\overline{X}_{i,i+1}$ generate $C_q(B^+)$ it follows that $\im \phi =C_q(B^+)$.
 Since  the dual pairing is nondegenerate,  $\phi$  is   injective.   Hence, $\phi$ is a  $k$-algebra isomorphism. 
\end{proof}


\begin{cor}
\label{Uq+ isomorphism}
If $\omega:\check{U}_q^{\geq0} \to \check{U}_q^{\leq 0}$ is the Cartan homomorphism then $\phi\circ \omega$  restricted to $U_q^+$ induces  an isomorphism onto $\rho^+(\kappa(O_q(N^+)'))$.
\end{cor}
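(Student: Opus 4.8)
The plan is to exploit that $\phi\circ\omega$ is a composition of algebra homomorphisms, so its restriction to $U_q^+$ is determined by the images of the algebra generators $E_1,\dots,E_n$, and the whole statement reduces to identifying the subalgebra these images generate. Recall that the Cartan homomorphism $\omega$ sends $E_i\mapsto F_i$ and $K_\mu\mapsto K_{-\mu}$, and is an isomorphism of $\check{U}_q^{\geq0}$ onto $\check{U}_q^{\leq0}$ that restricts to an isomorphism $U_q^+\to U_q^-$. Since $\phi$ is injective by the nondegeneracy established in Theorem~\ref{nondeg dual pairing}, the composite $\phi\circ\omega|_{U_q^+}$ is automatically an injective algebra homomorphism; hence the only real content is to show that its image equals $\rho^+(\kappa(O_q(N^+)'))$.

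First I would compute $\phi(F_i)=(\phi\circ\omega)(E_i)$ explicitly, exactly as in the proof of Theorem~\ref{phi isomorphism}. Writing $F_i=K_{\beta_{i+1}}K_{-\beta_i+\alpha_i}F_i$ (using $\beta_i-\alpha_i=\beta_{i+1}$, so the two $K$'s multiply to $K_0=1$), applying $\phi$ as an algebra map, and invoking Lemma~\ref{Xii and Xii+1 in phi}, which supplies $\phi(K_{\beta_{i+1}})=\overline{X}_{i+1,i+1}^{-1}$ and $\phi(K_{-\beta_i+\alpha_i}F_i)=-\qinv\qhat^{\,-1}\overline{X}_{i,i+1}$, gives
\[
\phi(F_i)=-\qinv\qhat^{\,-1}\,\overline{X}_{i+1,i+1}^{-1}\overline{X}_{i,i+1}.
\]
Relation~\eqref{OqM2} yields $\overline{X}_{i+1,i+1}^{-1}\overline{X}_{i,i+1}=q\,\overline{X}_{i,i+1}\overline{X}_{i+1,i+1}^{-1}$, so $\phi(F_i)=-\qhat^{\,-1}z_{i,i+1}$, a nonzero scalar multiple (nonzero since $q$ is not a root of unity) of the image of the generator $z_{i,i+1}$ of $O_q(N^+)'$ under the isomorphism $O_q(B^+)\cong C_q(B^+)$ of Corollary~\ref{OqB+ cong CqB+}. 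Consequently the image of $\phi\circ\omega|_{U_q^+}$ is precisely the subalgebra of $C_q(B^+)$ generated by the ``adjacent'' elements $z_{i,i+1}$, $1\le i\le n$.

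The heart of the argument — and the step I expect to be the main obstacle — is then to show that these adjacent generators already generate all of $\rho^+(\kappa(O_q(N^+)'))$, i.e.\ that every $z_{ij}$ with $j>i+1$ lies in $k\langle z_{1,2},\dots,z_{n,n+1}\rangle$. For this I would use the $j=l$ case of relation~\eqref{Oqn'+4}, namely $z_{ij}z_{jm}=\qinv z_{jm}z_{ij}+\qinv\qhat\,z_{im}$, which can be solved for $z_{im}$ as a noncommutative quadratic expression in $z_{ij}$ and $z_{jm}$. Taking $j=i+1$ and inducting on $m-i$ then writes every $z_{im}$ in terms of the adjacent $z_{k,k+1}$, so the subalgebra generated by the adjacent elements coincides with $\rho^+(\kappa(O_q(N^+)'))$.

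Combining the three steps finishes the proof: $\phi\circ\omega|_{U_q^+}$ is an injective algebra homomorphism whose image is generated by the $\phi(F_i)=-\qhat^{\,-1}z_{i,i+1}$, and by the generation statement this image is exactly $\rho^+(\kappa(O_q(N^+)'))$. Therefore $\phi\circ\omega|_{U_q^+}$ is an isomorphism of $U_q^+$ onto $\rho^+(\kappa(O_q(N^+)'))$, as claimed.
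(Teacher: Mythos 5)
Your proof is correct and follows essentially the same route as the paper: compute $(\phi\circ\omega)(E_i)=\phi(F_i)=-\qhat^{\,-1}\overline{X}_{i,i+1}\overline{X}_{i+1,i+1}^{-1}$ via Lemma~\ref{Xii and Xii+1 in phi} and Theorem~\ref{phi isomorphism}, note injectivity from nondegeneracy of the pairing, and identify the image with $\rho^+(\kappa(O_q(N^+)'))$. The one substantive difference is to your credit: the paper simply asserts that the adjacent elements $z_{i,i+1}$ generate $O_q(N^+)'$, whereas you justify this by solving the $j=l$ case of relation~\eqref{Oqn'+4} for $z_{im}$ and inducting on $m-i$, which is exactly the missing detail.
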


\begin{proof}
The Cartan homomorphism is  defined by $\omega(K_\lambda)=K_\lambda$ and $\omega(E_i)=F_i$. It is clear that $\omega$ is an isomorphism, so $\phi\circ \omega$ is an isomorphism. Hence  $\phi\circ \omega$  restricted to $U_q^+$ is an injective $k$-algebra homomorphism.  Moreover, from  Theorem \ref{phi isomorphism}  for $i=1,2,\ldots, n$ 
$$\phi\circ\omega(E_i)=\phi(K_{\beta_{i+1}})\phi(K_{-\beta_i+\alpha_i}F_i)=\overline{X}_{i+1,i+1}^{-1}\left(-q^{-1}\qhat^{\,-1}\overline{X}_{i,i+1}\right)=-\qhat^{\,-1}\overline{X}_{i,i+1}\overline{X}_{i+1,i+1}^{-1}$$
Since $\rho^+(\kappa(O_q(N^+)'))$ is generated by the  $\overline{X}_{i,i+1}\overline{X}_{i+1,i+1}^{-1}$ for $i=1,2,\ldots n$ then $\phi\circ \omega$ when restricted to $U_q^+$ is an isomorphism.
\end{proof}

\begin{thm}
\label{OqN isom Uqn}
There is a $k$-algebra isomorphism $\psi:U_q^+ \to O_q(N^+)'$ such that\break   $\psi(E_i)=-\qhat^{\,-1}X_{i,i+1}X_{i+1,i+1}^{-1}$ for $1\leq i\leq n$.
\end{thm}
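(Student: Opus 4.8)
The plan is to construct $\psi$ as a composite of two isomorphisms already in hand, both of which have the subalgebra $\rho^+(\kappa(O_q(N^+)'))$ of $C_q(B^+)$ as a common target. First I would record the isomorphism coming from the coordinate-ring side. By Theorem~\ref{Cq and Oq isom} the map $\kappa$ sends $X_{ij}$ to $c^{\omega_1}_{f^i,e_j}$, and by Theorem~\ref{J+} the kernel of $\rho^+$ is $\langle X_{ij}\mid i>j\rangle$; hence $\rho^+\circ\kappa$ factors through $O_q(B^+)=O_q(\SL_{n+1})/I^-$ and, as recorded in Corollary~\ref{OqB+ cong CqB+}, yields a $k$-algebra isomorphism $\Theta\colon O_q(B^+)\to C_q(B^+)$ with $\Theta(X_{ij})=\overline{X}_{ij}$. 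Restricting $\Theta$ to the subalgebra $O_q(N^+)'\subseteq O_q(B^+)$ gives an isomorphism of $O_q(N^+)'$ onto its image, which is by definition exactly $\rho^+(\kappa(O_q(N^+)'))$.

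Next I would invoke the isomorphism coming from the quantum-group side. Corollary~\ref{Uq+ isomorphism} furnishes an isomorphism $\phi\circ\omega|_{U_q^+}\colon U_q^+\to\rho^+(\kappa(O_q(N^+)'))$, and its proof records the explicit image $\phi\circ\omega(E_i)=-\qhat^{\,-1}\overline{X}_{i,i+1}\overline{X}_{i+1,i+1}^{-1}$. Since both $\Theta|_{O_q(N^+)'}$ and $\phi\circ\omega|_{U_q^+}$ are isomorphisms onto the \emph{same} subalgebra $\rho^+(\kappa(O_q(N^+)'))$ of $C_q(B^+)$, I may set
$$\psi:=\bigl(\Theta|_{O_q(N^+)'}\bigr)^{-1}\circ\bigl(\phi\circ\omega|_{U_q^+}\bigr)\colon U_q^+\longrightarrow O_q(N^+)',$$
which, being a composite of isomorphisms, is itself a $k$-algebra isomorphism.

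Finally I would verify the asserted formula. Because $\Theta$ is an algebra isomorphism with $\Theta(X_{ij})=\overline{X}_{ij}$, its inverse sends $\overline{X}_{i,i+1}\mapsto X_{i,i+1}$ and, being a homomorphism, carries inverses to inverses, so $\overline{X}_{i+1,i+1}^{-1}\mapsto X_{i+1,i+1}^{-1}$. Applying $\bigl(\Theta|_{O_q(N^+)'}\bigr)^{-1}$ to the expression for $\phi\circ\omega(E_i)$ above then gives $\psi(E_i)=-\qhat^{\,-1}X_{i,i+1}X_{i+1,i+1}^{-1}$, as required.

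I expect no serious obstacle here, since every ingredient is a prior result and the argument is purely a matter of gluing them together. The only points demanding genuine care are confirming that the two isomorphisms share precisely the same target subalgebra $\rho^+(\kappa(O_q(N^+)'))$ — so that the composite is well defined and surjective onto all of $O_q(N^+)'$ — and checking that $\Theta^{-1}$, as an algebra homomorphism, indeed carries $\overline{X}_{i+1,i+1}^{-1}$ to $X_{i+1,i+1}^{-1}$ rather than merely to $\Theta^{-1}(\overline{X}_{i+1,i+1})^{-1}$, which is the same thing but worth stating explicitly.
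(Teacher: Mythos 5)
Your proposal is correct and is essentially the paper's own argument: the paper proves this theorem in one line by citing Corollary~\ref{Uq+ isomorphism} together with the fact that $\kappa$ and the induced map $\rho^+$ (via Corollary~\ref{OqB+ cong CqB+}) are isomorphisms, which is precisely the composite $\bigl(\Theta|_{O_q(N^+)'}\bigr)^{-1}\circ\bigl(\phi\circ\omega|_{U_q^+}\bigr)$ you construct. Your write-up merely makes explicit the well-definedness of the composite and the computation of $\psi(E_i)$, both of which the paper leaves implicit.
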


\begin{proof}
This follows from Corollary \ref{Uq+ isomorphism} and the fact  that, $\rho^+$ and $\kappa$ are isomorphisms.
\end{proof}

We note that since the Cartan automorphism $\omega$ is an isomorphism of $U_q^+$ onto $U_q^+$ it also follows form Theorem \ref{OqN isom Uqn} that $U_q^- \cong O_q(N^+)'$.

\subsection{Conclusion}
In conclusion, we have shown that the algebras $O_q(N^\pm)=O_q(B^\pm)^{\co \theta^\pm}$, $O_q(N^\pm)'=O_q(B^\pm)^{\co \eta^\pm}$, and $U_q^\pm$ are all isomorphic from Theorem \ref{OqN isom Uqn}, Theorem \ref{N is coinvariants}, and Theorem \ref{OqIsomToOq'}.

\bibliography{sources}{}
\bibliographystyle{acm}
\begin{center}
{\scshape \noindent Andrew Jaramillo\\
Department of Mathematics\\
University of California, Santa Barbara\\
Santa Barbara, CA 93106\\}
\href{mailto:drewj@math.ucsb.edu}{drewj@math.ucsb.edu}
\end{center}
\end{document}